\title[Non-singular and pmp actions of infinite permutation groups]
  {Non-singular and probability measure-preserving actions of infinite permutation groups}
\author{Todor Tsankov}
\address{
  Universit\'e Claude Bernard Lyon 1 \\
  Institut Camille Jordan \\
  Lyon \\
  France
  -- and --
  Institut Universitaire de France}
\email{tsankov@math.univ-lyon1.fr}
\date{November 2024} 
\subjclass[2020]{Primary 37A40,37A50; Secondary 03C15}
\keywords{non-singular actions, oligomorphic groups, de Finetti theorem, primitive actions}
\setlist[enumerate,1]{label=(\roman*), font=\normalfont}
\numberwithin{equation}{section}
\newcommand{\RaNi}[2]{\frac{\ud #1}{\ud #2}}
\DeclareMathOperator{\Fin}{Fin}
\begin{document}

\begin{abstract}
  We prove two theorems in the ergodic theory of infinite permutation groups.
First, generalizing a theorem of Nessonov for the infinite symmetric group, we show that every non-singular action of a non-archimedean, Roelcke precompact, Polish group on a measure space $(\Omega, \mu)$ admits an invariant $\sigma$-finite measure equivalent to $\mu$. Second, we prove the following de Finetti type theorem: if $G \curvearrowright M$ is a primitive permutation group with no algebraicity verifying an additional uniformity assumption, which is automatically satisfied if $G$ is Roelcke precompact, then any $G$-invariant, ergodic probability measure on $Z^M$, where $Z$ is a Polish space, is a product measure.
\end{abstract}

\maketitle

\section{Introduction}

The theory of dynamical systems of Polish (non-locally compact) groups has recently seen a rapid development, as many connections with combinatorics and probability theory have emerged. This paper is a contribution to the ergodic theory of infinite permutation groups, that is, dynamical systems that preserve a measure or a measure class.

Infinite permutation groups arise naturally in a model-theoretic context as the automorphism groups of countable structures. A particularly important class of such groups, where the action remembers all model-theoretic information and  there is a perfect dictionary between model theory and permutation group theory, is that of oligomorphic groups. Recall that a permutation group $G \leq \Sym(M)$ is called \df{oligomorphic} if the diagonal action $G \actson M^n$ has finitely many orbits for every $n$. From a model-theoretic perspective, these are precisely the automorphism groups of $\aleph_0$-categorical structures. If one slightly relaxes this condition and considers \df{locally oligomorphic} actions instead (an action is \df{locally oligomorphic} if its restriction to any finitely many orbits is oligomorphic), there is also a characterization in terms of topological groups that makes no mention of a particular action. Recall that a topological group $G$ is \df{non-archimedean} if it admits a basis at $1_G$ consisting of open subgroups and it is \df{Roelcke precompact} if for every neighborhood $U$ of $1_G$, there is a finite set $F \sub G$ such that $G = UFU$. Then a non-archimedean Polish group is Roelcke precompact iff it admits a continuous, topologically faithful, locally oligomorphic action on a discrete countable set iff all of its continuous actions on a discrete set are locally oligomorphic \cite{Tsankov2012}. An action of $G$ on a discrete set $M$ is \df{topologically faithful} if the associated homomorphism $G \to \Sym(M)$ is a homeomorphic embedding, where $\Sym(M)$ is equipped with the pointwise convergence topology. Some (uninteresting, from a dynamical perspective) examples of Roelcke precompact groups that cannot be represented by faithful oligomorphic actions are the infinite compact non-archimedean groups, i.e., the profinite groups.

Roelcke precompact groups share some tameness properties with compact groups. For example, the unitary representations of non-archimedean Roelcke precompact groups can be completely classified \cite{Tsankov2012}. However, in contrast to the situation with compact groups, irreducible representations are usually infinite-dimensional. This classification has proved helpful for studying their probability measure-preserving (\df{pmp}, for short) actions. For example, an analogue of de Finetti's theorem holds for appropriate actions of such groups (\cite{Jahel2022}, also see below) and Jahel and Joseph~\cite{Jahel2025} have been able to classify the \emph{invariant random subgroups} of some of them.

The first part of this paper deals with non-singular actions of non-archimedean, Roelcke precompact, Polish groups. A \df{non-singular action} is an action on a $\sigma$-finite measure space that preserves the measure class; equivalently, it is an action on the measure algebra by automorphisms, not necessarily preserving the measure. This is a classical topic in ergodic theory with many applications. In a recent development connected to permutation groups, Conley, Jahel and Panagiotopoulos~\cite{Conley2024p}, extending a result of Ackerman, Freer, and Patel~\cite{Ackerman2016} for invariant measures, described all subgroups $H \leq \Sym(\N)$ such that the homogeneous space $\Sym(\N)/H$ carries a measure for which the translation action is non-singular. In a somewhat different direction, Neretin~\cite{Neretin2024} also studies non-singular actions of some Polish groups.

Examples of non-singular actions can be obtained by starting with a system that preserves a finite or a $\sigma$-finite measure and then taking a measure in the same measure class. It turns out that for non-archimedean, Roelcke precompact, Polish groups, this is all one can do, as our first main theorem shows.

\begin{theorem}
  \label{th:intro:non-singular}
  Let $G$ be a non-archimedean, Roelcke precompact, Polish group and let $G \actson (\Omega, \mu)$ be a non-singular action. Then $\Omega$ can be represented as a disjoint countable union $\bigsqcup_i \Omega_i$ of invariant subsets on each of which the action is isomorphic to an induced action from a pmp action of an open subgroup of $G$. In particular, there exists a $\sigma$-finite measure equivalent to $\mu$ which is preserved by $G$.
\end{theorem}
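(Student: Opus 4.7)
My plan is to first use the unitary representation theory of $G$ to produce finite measures invariant under open subgroups of $G$ and equivalent to restrictions of $\mu$, and then to assemble them via an inducing construction and a maximality argument. The starting point is the Koopman representation $\pi$ of $G$ on $\mathcal{H} := L^2(\Omega, \mu)$ defined by $\pi(g) f(x) = f(g^{-1} x)\, \rho(g,x)^{1/2}$, where $\rho(g,x) = d(g_* \mu)/d\mu(x)$ is the Radon--Nikodym cocycle. The crucial input, drawn from the representation theory of non-archimedean Roelcke precompact Polish groups in \cite{Tsankov2012}, is that every continuous unitary representation of $G$ satisfies that the union $\bigcup_V \mathcal{H}^V$ over open subgroups $V \leq G$ is dense in $\mathcal{H}$. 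Applied to the Koopman representation, this yields a nonzero vector fixed by some open subgroup $V$.

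Given such a nonzero $V$-invariant vector $f \in L^2(\Omega, \mu)$, a direct calculation from the formula for $\pi$ shows that the finite measure $\nu := |f|^2 \cdot \mu$, supported on $A := \{f \neq 0\}$, is $V$-invariant. I would then pass to the open setwise stabilizer $H := \{g \in G : g \cdot A = A \bmod \mu\text{-null}\}$ (which is open because it contains $V$), averaging $\nu$ over $H/V$ and shrinking $A$ if necessary so that $\nu$ becomes $H$-invariant and the translates $\{g A : gH \in G/H\}$ are pairwise essentially disjoint. The sum $\tilde\nu := \sum_{gH \in G/H} g_* \nu$ then defines a $\sigma$-finite $G$-invariant measure on $G \cdot A$ equivalent to $\mu|_{G \cdot A}$ (using non-singularity of the action), and $(G \cdot A, \tilde\nu)$ is tautologically isomorphic, as a $G$-space, to the action of $G$ induced from the pmp system $(H, A, \nu/\nu(A))$.

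To decompose all of $\Omega$, I would apply Zorn's lemma to produce a maximal family $\{\Omega_i\}$ of pairwise disjoint $G$-invariant measurable subsets on each of which the action is isomorphic to an induced action from a pmp action of some open subgroup; separability of the measure algebra forces the family to be countable. If $\Omega \setminus \bigsqcup_i \Omega_i$ were non-null, the restriction of the non-singular action to this complement would, by the construction above, yield a further such piece, contradicting maximality. Summing the individual $\sigma$-finite $G$-invariant measures then gives the required invariant measure equivalent to $\mu$ globally.

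The main obstacle I foresee is the representation-theoretic input in the first step: establishing density of $\bigcup_V \mathcal{H}^V$ in the Koopman representation of an arbitrary non-singular action. While the classification of representations in \cite{Tsankov2012} should yield this abstractly for any continuous unitary representation of $G$, one must carefully track the cocycle twist $\rho^{1/2}$ through the argument to ensure that the $V$-invariant vectors produced correspond to honest $V$-invariant measures rather than to sections of some associated bundle. A secondary technical point is the refinement of $A$ needed to make the $G/H$-translates of $A$ disjoint and $\nu$ genuinely $H$-invariant, which I expect to handle by standard measure-algebra manipulations.
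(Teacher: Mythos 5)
Your skeleton -- Koopman representation, a vector fixed by an open subgroup yielding a finite invariant measure $\nu=|f|^2\mu$ (this is exactly \autoref{l:fixed-vector}), translates of its support, and a Zorn exhaustion -- matches the paper's, but you have misplaced the difficulty. The step you flag as the main obstacle is the easy part: density of vectors fixed by open subgroups holds in \emph{every} unitary representation of \emph{any} non-archimedean group (\autoref{l:dense-fixed-basis}); no classification of representations and no tracking of the cocycle beyond \autoref{l:fixed-vector} is needed there. The genuine gap is the step you dismiss as ``standard measure-algebra manipulations'': making the measure $H$-invariant and the $G/H$-translates of $A$ essentially disjoint. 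First, averaging $\nu$ over $H/V$ only produces a finite $H$-invariant measure if $[H:V]<\infty$, which nothing guarantees. Second, and more seriously, for an arbitrary fixed vector there is no way to ``shrink $A$'' to force disjointness. Concretely, let $G=\Sym(\N)$ act on $[0,1]^{\N}$ with a Bernoulli product measure and take $f=\xi_1$ (or the indicator of $\{\omega:\omega(1)>1/2\}$), which is fixed by the open subgroup $V=G_1$. Then $A=\{\omega(1)>1/2\}$, its setwise stabilizer is again $G_1$, and the translates $g\cdot A=\{\omega(g\cdot 1)>1/2\}$ pairwise intersect in sets of positive measure; moreover this ergodic pmp action is not isomorphic to any action induced from a proper open subgroup, so no shrinking of $A$ can succeed -- the only correct conclusion here is to take the whole space with $V=G$, which your procedure, starting from an arbitrary fixed vector, never finds.

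What the paper does to overcome exactly this is the substantive content of the proof. It does not take an arbitrary open subgroup $V$: it works with the canonical elements $a\in\mathcal{A}$ of the universal action, whose stabilizers are self-commensurating (\autoref{l:stab-loc-fin}), and it chooses $a$ \emph{minimal} with $\mathcal{H}_a\neq 0$ in the order on algebraic closures; that such a minimal element exists is a new well-foundedness theorem (\autoref{th:A-partial-order}), which has no counterpart in your argument. Disjointness of the translates $t\cdot E$ for $t\notin G_a$ is then \emph{not} a measure-algebra manipulation but a representation-theoretic fact: by \autoref{p:unitary-rep} (which rests on the classification of unitary representations, \autoref{f:urep}) one has $\mathcal{H}_a \Perp[\mathcal{H}_{a\wedge t\cdot a}] \mathcal{H}_{t\cdot a}$, and self-commensuration gives $a\wedge t\cdot a<a$, so minimality forces $\mathcal{H}_{a\wedge t\cdot a}=0$ and hence orthogonality of the Radon--Nikodym derivatives of $\nu_a$ and $t\cdot\nu_a$. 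In the Bernoulli example above, the singleton $a=1$ is not minimal (constants lie in $\mathcal{H}$ at a smaller element), which is precisely why your choice fails and the paper's does not. So the classification of representations enters not to produce the fixed vector, as you anticipated, but to prove the disjointness you assumed; without the minimality/well-foundedness and self-commensuration ingredients the proposed proof does not go through. The final Zorn argument is fine and is essentially the paper's.
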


Induction is a standard construction in ergodic theory that takes an action $V \actson X$ of an open subgroup $V \leq G$ and produces a suitable action of $G$ on the disjoint union of $[G : V]$ copies of $X$. See \autoref{sec:induced-actions} for more details.

A slightly less detailed version of this theorem, in the special case where $G$ is the full symmetric group $\Sym(\N)$, was obtained by Nessonov~\cite{Nessonov2020}.

It is instructive to compare \autoref{th:intro:non-singular} with the situation for compact groups. An ergodic, non-singular action of a compact group $G$ is conjugate to the action on a homogeneous space $G \actson G/H$ equipped with the quotient of the Haar measure, which is finite, while an ergodic action of a Roelcke precompact, non-archimedean group can preserve an infinite measure (for example, any transitive action on a countably infinite set). Moreover, pmp actions of such groups can be very complicated.

The proof of the theorem is based on the classification of the unitary representations of such groups as well as on a new result concerning the algebraic closure operator for locally oligomorphic actions: in model-theoretic terms, the quasi-order on $M\eq$ given by $a \in \acl\eq b$ is well-founded (\autoref{th:A-partial-order}).

\autoref{th:intro:non-singular} reduces the study of non-singular actions of a group $G$ satisfying its hypotheses to pmp actions of its open subgroups, making the question of understanding the pmp actions of these groups all the more interesting. This problem is intimately connected to \df{exchangeability theory} in probability, which, in most general terms, attempts to classify all possible distributions of random variables $(\xi_a : a \in M)$ invariant under the action of a certain permutation group $G \leq \Sym(M)$. This is equivalent to classifying the invariant measures for the shift action $G \actson Z^M$, where, without loss of generality, one can take $Z$ to be an arbitrary fixed uncountable Polish space, for example, the interval $[0, 1]$. In view of the ergodic decomposition theorem, one can also assume that the measure is ergodic. The problem is easier the bigger the group $G$ is (as there are fewer invariant measures), and indeed, the first result of the theory is the classical de Finetti theorem which states that the only such ergodic measures are the product measures when $G$ is the full symmetric group $\Sym(M)$. In probabilistic terms, this means that the variables $\xi_a$ are independent, identically distributed. While a full classification with a reasonably general hypothesis on the permutation group $G \leq \Sym(M)$ (for example, oligomorphic) seems for the moment out of reach, in the second part of the paper, we concentrate on isolating the optimal hypotheses to obtain de Finetti's conclusion of independence.

This problem was already considered in \cite{Jahel2022}*{Theorem~1.1}, where it was proved that the conclusion holds if $G \leq \Sym(M)$ is a transitive oligomorphic permutation group with no algebraicity and weak elimination of imaginaries, and the techniques used there were again based on the classification of the unitary representations. The \df{no algebraicity} assumption (this means that the actions of stabilizers of finite subsets $A \sub M$ have infinite orbits outside of $A$) is clearly necessary; see \cite{Jahel2022}*{Example~5.1.2}. It is natural to try to replace weak elimination of imaginaries by the weaker assumption of primitivity of the action but it is claimed in \cite{Jahel2022}*{Example~5.1.3} that this is not possible. However, the example contains a mistake (cf. \autoref{rem:JT-example}) and primitivity is indeed sufficient, as follows from the theorem below. Finally, the assumption of the action being oligomorphic is essential for the methods of \cite{Jahel2022} because a classification of the unitary representations is not available for non-Roelcke precompact groups, but here we present a different proof based entirely on probability theory that dispenses with this assumption. The price we have to pay is an additional uniformity requirement in either of the no algebraicity or the primitivity hypothesis. The precise definitions are given in \autoref{sec:some-gener-deFinetti}.

\begin{theorem}
  \label{th:intro:deFinetti}
  Let $G \leq \Sym(M)$ be a primitive permutation group with no algebraicity such that
  at least one of the following two conditions holds:
  \begin{enumerate}
  \item \label{i:th:intro:una} $G \actson M$ has uniform non-algebraicity; or
  \item \label{i:th:intro:bded-diam} Every stabilizer $G_a$ is a boundedly maximal subgroup of $G$.
  \end{enumerate}
  (If $G$ is Roelcke precompact, then both conditions hold automatically under the other hypotheses.)
  
  Then any $G$-invariant, ergodic measure on $[0, 1]^M$ is a product measure.
\end{theorem}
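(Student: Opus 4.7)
The plan is to prove, by induction on $n := |A|$, that every finite marginal $\mu_A$ of $\mu$ is the product $\nu^A$, where $\nu$ is the common one-dimensional marginal law (well-defined by $G$-transitivity on $M$, which follows from primitivity). The case $n = 1$ is immediate, so fix $n \geq 2$ and assume the claim for marginals of strictly smaller size.

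Write $A = B \sqcup \{a_0\}$ with $|B| = n-1$. By induction, $\mu_B = \nu^B$, so it suffices to show that $\xi_{a_0}$ is independent of $\xi_B$ under $\mu$. Set $H := G_B$. By no algebraicity, the $H$-orbit $O$ of $a_0$ in $M \setminus B$ is infinite, and we fix distinct points $a_0, a_1, a_2, \ldots \in O$. For any bounded measurable $F \colon Z^B \to \mathbb{R}$ and $f \colon Z \to \mathbb{R}$, $H$-invariance of $\mu$ yields $\mathbb{E}[F(\xi_B) f(\xi_{a_k})] = \mathbb{E}[F(\xi_B) f(\xi_{a_0})]$ for every $k$, and therefore, writing $c := \int f\,d\nu$ and $S_N := N^{-1}\sum_{k < N} f(\xi_{a_k})$,
\begin{equation*}
  \mathbb{E}[F(\xi_B) f(\xi_{a_0})] = \mathbb{E}\bigl[F(\xi_B) \cdot S_N\bigr] \quad \text{for every } N \geq 1.
\end{equation*}
The crux is then to show that $S_N \to c$ in $L^2(\mu)$ for a well-chosen sequence $(a_k)$; granted this, passing to the limit and using the inductive identity $\mathbb{E}[F(\xi_B)] = \int F\,d\nu^B$ gives $\mathbb{E}[F(\xi_B) f(\xi_{a_0})] = \int F\,d\nu^B \cdot \int f\,d\nu$, which is the desired independence.

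To establish the $L^2$ convergence, expand
\begin{equation*}
  \lVert S_N - c \rVert_{L^2(\mu)}^2 = \frac{1}{N^2} \sum_{k, l < N} \operatorname{Cov}\bigl(f(\xi_{a_k}), f(\xi_{a_l})\bigr);
\end{equation*}
by $G$-invariance, each covariance depends only on the $G$-orbit of $(a_k, a_l) \in M^2$. Two ingredients must combine to force this sum to be $o(N^2)$. First, ergodicity together with primitivity — which forbids any non-trivial $G$-invariant equivalence relation on $M$ — excludes persistent correlations along a sub-orbit of $(a_k)$, because such correlations would witness a non-trivial $G$-invariant decomposition. Second, the uniformity hypothesis converts this qualitative absence of structure into a quantitative mixing estimate: uniform non-algebraicity controls how many elements of a $G_B$-orbit can fail to be "generic" with respect to $\xi_B$, uniformly in $B$, while bounded maximality of point-stabilizers yields a uniform double-coset decomposition of $G$ in terms of $H$ and a transporter, which provides an analogous handle on pair-orbits.

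I expect the main obstacle to lie precisely in this quantitative mixing step. A natural intermediate observation is that the tail $\sigma$-algebra $\bigcap_{C \text{ finite}} \sigma(\xi_b : b \in M \setminus C)$ is $G$-invariant and hence $\mu$-trivial by ergodicity; this Hewitt--Savage-type zero-one law can then be combined with the structural constraints from primitivity and the quantitative input from uniformity to extract the required covariance decay. The two uniformity hypotheses likely require parallel but distinct sub-arguments, each culminating in the same Cesàro estimate and thereby closing the induction.
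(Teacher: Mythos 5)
Your reduction is fine as far as it goes, but it stops exactly where the theorem's actual content begins, and the one shortcut you propose for that step is incorrect. The claim that ``the tail $\sigma$-algebra $\bigcap_{C \text{ finite}} \sigma(\xi_b : b \in M \setminus C)$ is $G$-invariant and hence $\mu$-trivial by ergodicity'' is a non sequitur: $g \cdot \mathcal{T}_0 = \mathcal{T}_0$ means the family of tail events is permuted by $G$, not that each tail event is fixed by $G$, so ergodicity (triviality of the invariant $\sigma$-field $\mathcal{J}$) gives nothing directly about $\mathcal{T}_0$. In fact, granting your (correct) observation that $\xi_{a_0}$ decouples from $\xi_B$ once one can average over an infinite $G_{(B)}$-orbit, the convergence $S_N \to c$ in $L^2$ is essentially \emph{equivalent} to triviality of the tail, i.e.\ to the conclusion; and the Jahel--Perruchaud example cited in the introduction (a primitive action with no algebraicity carrying a non-product ergodic measure) shows this can genuinely fail without the uniformity hypotheses. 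So there is no Hewitt--Savage-type zero--one law to invoke, and any proof must use condition (i) or (ii) in an essential way --- which your sketch only gestures at (``quantitative mixing'', ``covariance decay''), without an argument. Note also that the covariance expansion of $\|S_N - c\|^2$ is not obviously controllable: the pairs $(a_k,a_l)$ may fall into infinitely many $G$-orbits (no oligomorphicity is assumed), and even in the classical exchangeable case the off-diagonal covariance is a single nonnegative constant whose vanishing \emph{is} the nontrivial content, not a consequence of soft structure.

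For comparison, the paper splits the work differently. Under no algebraicity alone it proves (via Neumann's lemma and reverse martingale convergence) that the $\sigma$-fields $\mathcal{F}_a$ of $G_a$-invariant events are conditionally independent over a dynamical tail $\mathcal{T}$, and that $\mathcal{T} \cap \bigvee_a \mathcal{F}_a = \bigvee_a (\mathcal{T} \cap \mathcal{F}_a)$; this reduces the theorem to showing $\mathcal{F}_a \cap \mathcal{T} \subseteq \mathcal{J}$. That inclusion is where primitivity and the uniformity hypotheses do real work: under uniform non-algebraicity one produces two points $a' \neq a''$ of the same type over the relevant data so that a translate of a given event in $\mathcal{F}_a \cap \mathcal{T}$ is fixed by both $G_{a'}$ and $G_{a''}$, and maximality of $G_{a'}$ forces it into $\mathcal{J}$; under bounded maximality one shows $\|\mathbf{1}_E - g\cdot \mathbf{1}_E\| < \eps$ for every $g \in G$ via a word-length estimate and then takes the element of minimal norm in the closed convex hull of the orbit to produce a nearby $G$-invariant vector. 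You would need to supply an argument of comparable substance to close your Ces\`aro estimate; as written, the proposal has a genuine gap at its central step.
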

Condition \autoref{i:th:intro:bded-diam} above is satisfied, for example, if the action $G \actson M^2$ has finitely many orbits but also in many other situations. This condition has been considered in the pseudo-finite case in \cite{Liebeck2010}.
Both conditions \autoref{i:th:intro:una} and \autoref{i:th:intro:bded-diam} are remnants of compactness and are automatically satisfied if $G$ is Roelcke precompact. Condition \autoref{i:th:intro:una} is also satisfied, for example,
if $M$ is a homogeneous structure whose age is given by a first-order universal theory (or, equivalently, a collection of  \emph{finite} forbidden configurations) and $G = \Aut(M)$. An important class of examples where the hypotheses are not satisfied but the conclusion still holds is the rational Urysohn space and some generalizations of it. In a very recent preprint \cite{Barritault2024pp}, Barritault, Jahel, and Joseph manage to classify all unitary representations of its isometry group and deduce, in particular, a version of \autoref{th:intro:deFinetti}.

Jahel and Perruchaud \cite{Jahel2024p} have constructed an example of a primitive action $G \actson M$ with no algebraicity, and an invariant measure on $\R^M$ based on a Gaussian random process, where the conclusion of the theorem does not hold, so the uniformity conditions cannot be omitted. Finally, we note that the transitivity assumption is not essential; see \autoref{th:tail-equal-invariant} for the precise general statement of the theorem.

The proof of \autoref{th:intro:deFinetti} is based on a simple, general result that we believe to be of independent interest (cf. \autoref{c:tail-single} and \autoref{rem:tail0-instead-of-tail}).
\begin{theorem}
  \label{th:intro:indep-tail}
 Let $G \leq \Sym(M)$ be a permutation group such that for every $a \in M$, the orbits of $G_a$ on $M \sminus \set{a}$ are infinite. Let $(\xi_a: a \in M)$ be random variables whose joint distribution is $G$-invariant. Then they are conditionally independent over the tail $\sigma$-field.  
\end{theorem}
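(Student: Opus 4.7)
The plan is to reduce the statement, via the tower property and an inductive argument, to the single-coordinate identity
\[
\mathbb{E}[f(\xi_a) \mid \mathcal{F}_{M \sminus \{a\}}] \;=\; \mathbb{E}[f(\xi_a) \mid \mathcal{T}]
\]
for every $a \in M$ and every bounded measurable $f \colon Z \to \mathbb{R}$ (writing $\mathcal{F}_A := \sigma(\xi_c : c \in A)$). From this identity, the general conditional-independence statement is obtained by peeling off coordinates: for two finite disjoint blocks $A, B \sub M$ and a fixed $a \in A$, write $\mathbb{E}[\prod_{a' \in A} f_{a'}(\xi_{a'}) h(\xi_B) \mid \mathcal{T}]$, condition inside on $\sigma(\xi_{A \sminus \{a\}}, \xi_B) \vee \mathcal{T} \sub \mathcal{F}_{M \sminus \{a\}}$, apply the single-coordinate identity to pull $\mathbb{E}[f_a(\xi_a) \mid \mathcal{T}]$ outside, and induct on $|A|$; the extension to countable disjoint families is by standard monotone limits.

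Set $\rho := \mathbb{E}[f(\xi_a) \mid \mathcal{F}_{M \sminus \{a\}}]$. The crucial first observation is that $\rho$ is $G_a$-invariant as a function on $Z^M$: for $g \in G_a$, the Koopman operator $\alpha_g$ preserves $\mu$, fixes the coordinate projection $\xi_a$ pointwise, and leaves $\mathcal{F}_{M \sminus \{a\}}$ invariant as a $\sigma$-field (because $g(M \sminus \{a\}) = M \sminus \{a\}$), so the standard equivariance of conditional expectation under measure-preserving transformations yields $\rho \circ \alpha_g = \rho$ almost surely. Since $\mathcal{T} \sub \mathcal{F}_{M \sminus \{a\}}$, the tower property gives $\mathbb{E}[\rho \mid \mathcal{T}] = \mathbb{E}[f(\xi_a) \mid \mathcal{T}]$, so it remains to show that $\rho$ is already $\mathcal{T}$-measurable. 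Enumerating $M \sminus \{a\} = (b_n)_{n \ge 1}$ and setting $F_n := \{a, b_1, \ldots, b_n\}$, reverse-martingale convergence along $\mathcal{F}_{M \sminus F_n} \downarrow \mathcal{T}$ reduces this to stationarity of the reverse martingale, equivalently, to the vanishing of the single-coordinate variance $V_b := \|\rho - \mathbb{E}[\rho \mid \mathcal{F}_{M \sminus \{a, b\}}]\|_2^2$ for every $b \ne a$.

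The infinite-orbit hypothesis enters precisely here. Choose $g_n \in G_a$ with $b_n' := g_n b$ all distinct, so that $(b_n')$ escapes every finite set; by the $G_a$-invariance of $\rho$ and measure-preservation by $\alpha_{g_n}$ one has $V_{b_n'} = V_b$ for all $n$, and Pythagoras telescoping along the chain $F_n' := \{a, b_1', \ldots, b_n'\}$ yields the summability
\[
\sum_{n \ge 1} \bigl\|\mathbb{E}[\rho \mid \mathcal{F}_{M \sminus F_{n-1}'}] - \mathbb{E}[\rho \mid \mathcal{F}_{M \sminus F_n'}]\bigr\|_2^2 \;\le\; \|\rho\|_2^2 < \infty.
\]
The main obstacle is combining these two inputs to force $V_b = 0$: the $n$th Pythagoras increment measures only the \emph{incremental} variance contributed by $\xi_{b_n'}$ once the first $n-1$ points of the orbit have been removed from the conditioning, which is generally strictly smaller than $V_b$ itself. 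Bridging this gap---via an orthogonality argument that exploits both the $G_a$-invariance of $\rho$ and a symmetrized Pythagoras expansion over finite segments of the orbit, forcing the incremental increments back to $V_b$ and hence to $0$---is the heart of the proof and the place where the hypothesis of infinite one-point orbits for every $a \in M$ plays its decisive role.
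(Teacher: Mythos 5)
Your reduction to the single-coordinate identity $\mathbb{E}[f(\xi_a)\mid\mathcal{F}_{M\sminus\{a\}}]=\mathbb{E}[f(\xi_a)\mid\mathcal{T}]$ and the peeling/induction step are fine, but the proof of that identity is not actually given: the final paragraph, which you yourself call ``the heart of the proof,'' only gestures at ``an orthogonality argument \ldots\ forcing the incremental increments back to $V_b$'' without supplying it, and as described it cannot work. The Pythagoras increments $\|\mathbb{E}[\rho\mid\mathcal{F}_{M\sminus F'_{n-1}}]-\mathbb{E}[\rho\mid\mathcal{F}_{M\sminus F'_n}]\|_2^2$ are summable, hence tend to $0$; so they can all be ``forced back to $V_b$'' only if $V_b=0$, which is exactly what you are trying to prove --- the proposed bridge is circular. (What the two inputs do give is only $V_b\le\|\rho-\mathbb{E}[\rho\mid\mathcal{F}_{M\sminus F'_n}]\|_2^2$ for each $n$, i.e.\ an upper bound on $V_b$ by a quantity that does not tend to $0$.) There is a second, smaller gap: even granting $V_b=0$ for every single $b\neq a$, the claimed ``equivalence'' with stationarity of the reverse martingale needs justification, because $\mathcal{F}_{M\sminus F_{n-1}}\not\subseteq\mathcal{F}_{M\sminus\{a,b_n\}}$ and, for dependent coordinates, measurability with respect to each $\mathcal{F}_{M\sminus\{a,b\}}$ does not imply measurability with respect to $\mathcal{F}_{M\sminus F}$ for a finite $F$ (intersections of $\sigma$-fields do not commute with intersections of index sets in general).

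The paper's proof (Proposition~\ref{p:tail-indep} together with \autoref{rem:tail0-instead-of-tail}) avoids the incremental-variance analysis entirely, and the difference is instructive. Instead of moving the conditional expectation $\rho$, it keeps the $G_a$-invariant variable $\xi=f(\xi_a)$ fixed and moves the \emph{other} block: for a finite $B\subseteq M\sminus\{a\}$ and an exhaustion $S_n\uparrow M$, Neumann's lemma (\autoref{l:Neumann}) applied to $G_a$ produces $g_n\in G_a$ with $g_n\cdot B\cap S_n=\emptyset$, and then one compares norms of projections: $\|\mathbb{E}_{\mathcal{T}\vee\mathcal{F}_B}\xi\|=\|\mathbb{E}_{\mathcal{T}\vee\mathcal{F}_{g_n\cdot B}}\xi\|\le\|\mathbb{E}_{\mathcal{G}_n}\xi\|\to\|\mathbb{E}_{\mathcal{T}}\xi\|$ by reverse martingale convergence, where $\mathcal{G}_n$ is the $\sigma$-field of the coordinates outside $S_n$. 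Equality of norms of nested projections then forces $\mathbb{E}_{\mathcal{T}\vee\mathcal{F}_B}\xi=\mathbb{E}_{\mathcal{T}}\xi$, which is conditional independence of $\sigma(\xi_a)$ from each finite block $B$, hence from $\mathcal{F}_{M\sminus\{a\}}$, over the tail. Your setup (group elements pushing points to infinity, reverse martingales, $L^2$ geometry) contains the right ingredients; the missing step is precisely this norm-comparison trick with finite blocks, and if you want to salvage your write-up, replacing the single-coordinate variance scheme by that comparison is the way to do it.
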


\subsection*{Acknowledgments}
I would like to thank Tomás Ibarlucía and Yuri Neretin for bringing to my attention Nessonov's article \cite{Nessonov2020}, which inspired \autoref{th:intro:non-singular}. I am also grateful to Rémi Barritault, Colin Jahel, and Matthieu Joseph for useful remarks on a preliminary version of the paper and to the anonymous referee for useful suggestions.


\section{Locally oligomorphic permutation groups}
\label{sec:locally-oligomorphic}

\subsection{The lattice of algebraically closed sets}
\label{sec:latt-algebr-clos}

Let $G$ be a non-archimedean, Roelcke precompact, Polish group. All continuous actions of $G$ on a discrete set $N$ are \df{locally oligomorphic}: i.e., for every $n$ and every orbit $G \cdot a$, there are only finitely many orbits of the diagonal action $G \actson (G \cdot a)^n$ \cite{Tsankov2012}*{Theorem~2.4}. Equivalently, for any tuple $\bar a$ from $N$, every $G$-orbit splits into finitely many $G_{\bar a}$ orbits (here $G_{\bar a}$ denotes the stabilizer of the tuple $\bar a$). It follows that if $G$ acts locally oligomorphically on $N$, then so does any of its open subgroups. The theory of locally oligomorphic permutation groups is parallel to the one of $\aleph_0$-categorical structures in model theory, with the additional subtlety that one must allow infinitely many sorts. Some of the theory was developed using permutation group theoretic language in \cite{Evans2016}. In this section, we prove a well-foundedness result for a certain partial order that will be crucial for the proof of \autoref{th:intro:non-singular}.

Let $G$ be Polish, Roelcke precompact and let $G \actson M$ be a continuous action on the countable set $M$. If $D \sub M$, we denote by $G_D$ the \df{setwise stabilizer} of $D$
\begin{equation*}
  G_D \coloneqq \set{g \in G : g \cdot D = D},
\end{equation*}
and by $G_{(D)}$ the \df{pointwise stabilizer}:
\begin{equation*}
  G_{(D)} \coloneqq \set{g \in G : g \cdot a = a \text{ for all } a \in D}.
\end{equation*}
Borrowing some terminology from model theory, we will say that a set $D \sub M$ is \df{definable} if $G_D$ is open. Note that every finite set is definable.

We denote by $\Fin(M)$ the collection of finite subsets of $M$. If $A \in \Fin(M)$, we define the \df{algebraic closure} of $A$ by:
\begin{equation*}
  \acl A = \set{b \in M : G_A \cdot b \text{ is finite}}.
\end{equation*}
A subset $D \sub M$ is \df{algebraically closed} if $\acl A \sub D$ for every finite $A \sub D$.
A subset $D \sub M$ is \df{locally finite} if it intersects each $G$-orbit in a finite set.

\begin{lemma}
  \label{l:acl}
  Let $A \sub M$ be finite. Then $\acl A$ is definable, algebraically closed, and locally finite.
\end{lemma}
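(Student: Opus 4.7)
The three assertions are essentially independent, so the plan is to prove each one in turn, with the last one being the only place where the Roelcke precompactness (via local oligomorphy) really enters.

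\textbf{Definability.} Since $A$ is finite, the pointwise stabilizer $G_{(A)}$ is open. The plan is to show that $G_{(A)} \subseteq G_{\acl A}$, from which it will follow that $G_{\acl A}$ is open. For this, I would observe that $\acl A$ is by definition the union of the finite $G_{(A)}$-orbits meeting $M$ (equivalently, the finite $G_A$-orbits, since $G_{(A)}$ has finite index in $G_A$), so $G_{(A)}$ clearly preserves $\acl A$ setwise.

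\textbf{Algebraic closedness.} Given a finite $B \subseteq \acl A$, I need to show $\acl B \subseteq \acl A$. The key index computation: since each element of $B$ has finite $G_{(A)}$-orbit, the $G_{(A)}$-orbit of $B$ as a tuple is contained in the product of these finite orbits and is therefore finite, so $H \coloneqq G_{(A)} \cap G_{(B)}$ has finite index in $G_{(A)}$. Now if $b \in \acl B$, then $H \cdot b$ is contained in the finite set $G_{(B)} \cdot b$, and $G_{(A)} \cdot b$ is the union of the finitely many $H$-translates of $H \cdot b$, hence finite. So $b \in \acl A$.

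\textbf{Local finiteness.} This is where I would use the fact that $G$ acts locally oligomorphically on $M$ (\cite{Tsankov2012}*{Theorem~2.4}). Fix a $G$-orbit $O \subseteq M$. By local oligomorphy applied to the tuple $A$, the pointwise stabilizer $G_{(A)}$ has only finitely many orbits on $O$, and hence so does $G_A$. The elements of $\acl A \cap O$ are, by definition, exactly those lying in $G_A$-orbits on $O$ that are finite; being contained in the union of those finitely many finite orbits, $\acl A \cap O$ is finite.

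None of these steps should pose a real obstacle: each is a short bookkeeping argument. If anything is delicate, it is making sure that one is consistent about pointwise versus setwise stabilizers and that the finiteness of $A$ (hence of $B$) is used to pass between them, which is straightforward because for a finite set $A$ one has $[G_A : G_{(A)}] \le |A|! < \infty$.
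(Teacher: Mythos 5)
Your proposal is correct and follows essentially the same route as the paper: definability via the containment of the (open) stabilizer of $A$ in $G_{\acl A}$, algebraic closedness via the finite-index argument showing $G_A \cap G_B$ (or your $G_{(A)} \cap G_{(B)}$) has finite index, and local finiteness from the stabilizer of $A$ having finitely many orbits on each $G$-orbit. The only difference is your systematic use of pointwise rather than setwise stabilizers, which is immaterial since $[G_A : G_{(A)}] < \infty$ for finite $A$.
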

\begin{proof}
  We have that $G_A \leq G_{\acl A}$, so $G_{\acl A}$ is open and $\acl A$ is definable. To see that $\acl A$ is algebraically closed, let $B \sub \acl A$ be finite and let $c \in \acl B$. Then $G_B \cdot c$ is finite, so $(G_A \cap G_B) \cdot c$ is also finite. On the other hand, $G_A \cdot B$ is finite, so $G_A \cap G_B$ has finite index in $G_A$, and therefore $G_A \cdot c$ is also finite. Finally, local finiteness follows from the fact that the action $G_A \actson G \cdot c$ has only finitely many orbits for any $c \in M$, and thus the union of the finite orbits is finite.
\end{proof}

For the next lemma, recall that two subgroups of $G$ are \df{commensurate} if their intersection has finite index in both. If $H \leq G$, the \df{commensurator of $H$ in $G$}, denoted by $\Comm_G(H)$, is the subgroup of all $g \in G$ such that $H$ and $gHg^{-1}$ are commensurate. We also denote by $N_G(H)$ the normalizer of $H$ in $G$. The subgroup $H$ is called \df{self-commensurating} if $\Comm_G(H) = H$ and \df{self-normalizing} if $N_G(H) = H$. 
\begin{lemma}
  \label{l:stab-loc-fin}
  Let $D \sub M$ be definable, algebraically closed, and locally finite. Then the following hold:
  \begin{enumerate}
  \item \label{i:slf:1} There exists a finite $A \sub D$ such that $G_{A} = G_{D}$ and $D = \acl A$. 
  \item \label{i:slf:2} For every $g \in G$, if $g G_D g^{-1} \cap G_D$ has finite index in $G_D$, then $g \in G_D$.
  \item \label{i:slf:3} In particular, $G_{D}$ is self-commensurating and, therefore, self-normalizing.
  \end{enumerate}
\end{lemma}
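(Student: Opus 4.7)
My plan is to prove (i) first, and then deduce (ii) and (iii) from it.

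For (i), I begin with the observation that, since $G_D$ is open, there exists a finite $F \subseteq M$ with $G_{(F)} \leq G_D$. For any $d \in D$, the $G_{(F)}$-orbit of $d$ lies in $D$ (since $G_{(F)} \leq G_D$) and in the $G$-orbit of $d$, which meets $D$ in a finite set by local finiteness; hence this orbit is finite, so $d \in \acl F$, and therefore $D \subseteq \acl F$. The crux of (i) is to produce a finite $A_0 \subseteq D$ with $\acl A_0 = D$. For $A_0 \subseteq D$ finite, one has $\acl A_0 \subseteq D$ by algebraic closedness, and the reverse inclusion $D \subseteq \acl A_0$ is in fact equivalent, by the same orbit argument, to $G_{(A_0)} \leq G_D$. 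I expect this existence to be the main obstacle; I would attempt it using Roelcke precompactness (which bounds the number of open subgroups of $G$ between $G_{(F)}$ and $G$, via finiteness of the double coset space $G_{(F)} \backslash G / G_{(F)}$) together with a careful analysis of how $D$ sits inside $\acl F$. Once $A_0$ is in hand, I set $A := G_D \cdot A_0$; this is $G_D$-invariant and contained in $D$, and it is finite because the $G_D$-orbit of each $a \in A_0 \subseteq D$ lies in $D \cap (G \cdot a)$, which is finite. Then $\acl A \supseteq \acl A_0 = D$ and $\acl A \subseteq D$ by algebraic closedness, so $\acl A = D$. Finally $G_A = G_D$: the inclusion $G_D \leq G_A$ follows from $G_D$-invariance of $A$, while $G_A \leq G_D$ holds because any $g \in G_A$ preserves $A$ setwise, hence $g \cdot D = g \cdot \acl A = \acl(g \cdot A) = \acl A = D$.

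For (ii), I apply (i) to obtain a finite $A \subseteq D$ with $G_A = G_D$ and $\acl A = D$. The hypothesis $[G_D : G_D \cap g G_D g^{-1}] < \infty$ becomes $[G_A : G_A \cap G_{gA}] < \infty$, meaning the $G_A$-orbit of $gA$ (as a finite subset of $M$) is finite; let $B$ be its union, a finite $G_A$-invariant subset of $M$. For each $a \in A$, the $G_A$-orbit of $ga$ sits inside $B$ and is hence finite; since $G_{(A)}$ has finite index in $G_A$ (at most $|A|!$), the $G_{(A)}$-orbit of $ga$ is also finite, so $ga \in \acl A = D$. Therefore $gA \subseteq D$, so $A \cup gA \subseteq D$, and algebraic closedness gives $\acl(A \cup gA) \subseteq D$. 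But $\acl(A \cup gA) \supseteq \acl(gA) = g \cdot \acl A = gD$, so $gD \subseteq D$. Since conjugate subgroups share
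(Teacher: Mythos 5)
Your reduction of (i) is fine as far as it goes: granting a finite $A_0 \subseteq D$ with $\acl A_0 = D$, passing to $A = G_D \cdot A_0$ and checking $G_A = G_D$, $\acl A = D$ is correct. But the existence of $A_0$ is not a technical detail you can defer --- it is the entire content of part (i), and you explicitly leave it as a plan (``I would attempt it using Roelcke precompactness \dots together with a careful analysis of how $D$ sits inside $\acl F$''). The paper's proof does exactly this missing step: enumerate the $G$-orbits $S_0, S_1, \dots$, set $A_i = D \cap (S_0 \cup \dots \cup S_i)$ (finite by local finiteness and $G_D$-invariant, so $G_{A_i} \supseteq G_D$), note that $G_{A_0} \supseteq G_{A_1} \supseteq \cdots$ with $\bigcap_i G_{A_i} = G_D$, and use Roelcke precompactness (finitely many $G_D$-double cosets in $G$) to see that the chain stabilizes at $G_D$; local finiteness then gives $\acl A_i = D$. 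Your observation that there are only finitely many subgroups between $G_{(F)}$ and $G$ could serve as a substitute for the double-coset count, but it only applies to subgroups containing $G_{(F)}$, so you would still need to choose the finite sets so that their setwise stabilizers contain $G_D$ --- which is precisely what the orbit-wise exhaustion above arranges. As written, the key step is asserted, not proved.

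For (ii), the portion you wrote is sound and slightly different from the paper: the paper's orbit-saturated choice of $A$ lets it conclude $g \cdot A \subseteq A$ directly, whereas your $A$ need not be orbit-saturated in $D$, so you instead derive $g \cdot A \subseteq \acl A = D$ and hence $g \cdot D = \acl(g \cdot A) \subseteq D$. But the argument breaks off there, and $g \cdot D \subseteq D$ does not by itself give $g \in G_D$, since $D$ is in general infinite. The missing step does follow from local finiteness: $g$ preserves each $G$-orbit, so it maps the finite set $D \cap (G \cdot a)$ injectively into itself, hence onto it, for every $a$; therefore $g \cdot D = D$. You should supply this (or some completion of your truncated final sentence), and also record that (iii) follows at once from (ii), since commensurability of $G_D$ and $gG_Dg^{-1}$ implies the hypothesis of (ii), and $N_G(G_D) \subseteq \mathrm{Comm}_G(G_D)$.
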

\begin{proof}
  \autoref{i:slf:1}
  Write $H = G_{D}$. Let $S_0, S_1, \ldots$ be an enumeration of the orbits of $G$ on $M$. Let $A_i = \bigcup_{j \leq i} S_j \cap D$. As $D$ is locally finite, each $A_i$ is finite. Moreover, $G_{A_0} \supseteq G_{A_1} \supseteq \cdots$ and $\bigcap_i G_{A_i} = H$. As $H$ is open, by Roelcke precompactness, each $G_{A_i}$ contains only finitely many $H$-double cosets, so there is $i$ such that $G_{A_i} = H$. Let $A = A_i$. We check that $\acl A = D$. The $\sub$ inclusion follows from the fact that $D$ is algebraically closed. For the other, if $a \in D$, then $G_A \cdot a = H \cdot a \sub D \cap G \cdot a$, which is finite by the local finiteness of $D$.

  \autoref{i:slf:2}
  If $g G_A g^{-1} \cap G_A$ has finite index in $G_A$, then $G_A g \cdot A$ is finite. Thus $g \cdot A \sub \acl A = D$, which implies that $g \cdot A \sub A$. As $A$ is finite, this means that $g \cdot A = A$ and thus $g \in G_A = G_D$.

  \autoref{i:slf:3} This follows directly from \autoref{i:slf:2}.
\end{proof}

\begin{lemma}
  \label{l:pointwise-st}
  Let $D \sub M$ be definable, algebraically closed, and locally finite and let $H = G_{(D)}$. Then:
  \begin{enumerate}
  \item \label{i:ps:1} Every infinite $G_D$-orbit in $\Fin(M)$ splits into infinite $H$-orbits;
  \item \label{i:ps:2} Every $G$-orbit in $\Fin(M)$ splits into finitely many $H$-orbits.
  \end{enumerate}
\end{lemma}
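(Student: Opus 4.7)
The plan is to prove (ii) first and deduce (i) as a consequence; the crucial ingredient will be that the quotient $G_{(A)}/H$ is compact, where $A \sub D$ is the finite subset given by \autoref{l:stab-loc-fin}\autoref{i:slf:1}, with $G_A = G_D$ and $\acl A = D$.

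To establish this compactness, I would first observe that $H \triangleleft G_{(A)}$: for $g \in G_{(A)} \leq G_D$, one has $gHg^{-1} = G_{(g \cdot D)} = G_{(D)} = H$. Thus $G_{(A)}/H$ is a Polish group acting faithfully on $D$ via a continuous injective homomorphism into $\Sym(D)$ (with the pointwise convergence topology). This homomorphism is a topological embedding: for any finite $S \sub D$, the image of the basic open subgroup $G_{(A \cup S)}/H$ is precisely the pointwise stabilizer of $A \cup S$ within the image. Invoking the standard fact that a subgroup of a Polish group is closed whenever it is itself Polish in the subspace topology (equivalently, a $G_\delta$ subgroup), the image is a closed subgroup of $\Sym(D)$. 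Since every $d \in D = \acl A$ has finite $G_{(A)}$-orbit, this closed subgroup has finite orbits on $D$; a further closed embedding into the profinite group $\prod_O \Sym(O)$ over the finite orbits then shows that $G_{(A)}/H$ is profinite, and in particular compact.

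Part (ii) should follow quickly: the compact group $G_{(A)}/H$ acts continuously on the discrete countable set $\Fin(M)/H$, so all its orbits there are finite, meaning every $G_{(A)}$-orbit in $\Fin(M)$ is a finite union of $H$-orbits. Combining with local oligomorphicity (\cite{Tsankov2012}*{Theorem~2.4}), which yields that each $G$-orbit in $\Fin(M)$ is a finite union of $G_A$-orbits, and with $[G_A : G_{(A)}] \leq |\Sym(A)| < \infty$, one obtains (ii).

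For (i): since $H \triangleleft G_D$, all $H$-orbits inside a given $G_D$-orbit share the same cardinality, so it suffices to exhibit one $B_0$ in the orbit with $H \cdot B_0$ infinite. If $B_0 \sub D$, then $G_D \cdot B_0 \sub \Fin(D)$ would be finite (by compactness of $G_D/H$ acting on the discrete set $\Fin(D)$), so the infinite-orbit hypothesis forces some $b \in B_0 \sminus D$. If $H \cdot b$ were finite, then $[H : H_b]$ would be finite while $[G_{(A)} : G_{(A \cup \{b\})}] = |G_{(A)} \cdot b|$ is infinite (as $b \notin \acl A = D$); the identity $[G_{(A)} : H G_{(A \cup \{b\})}] = [G_{(A)} : G_{(A \cup \{b\})}] / [H : H_b]$ would then produce infinitely many $H$-orbits in $G_{(A)} \cdot b$, contradicting (ii) applied to $\{b\}$. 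The main obstacle in this plan is the compactness claim for $G_{(A)}/H$, and within it the descriptive-set-theoretic step showing that the continuous injective homomorphism $G_{(A)}/H \hookrightarrow \Sym(D)$ has closed image.
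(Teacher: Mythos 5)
You are right to single out the compactness of $G_{(A)}/H$ as the main obstacle: that is precisely where your argument has a genuine gap, and the justification you give for the ``topological embedding'' does not close it. Observing that, for finite $S \sub D$, the image of $G_{(A \cup S)}/H$ is the pointwise stabilizer of $A \cup S$ inside the image only yields the easy direction (continuity of the map into $\Sym(D)$). For the map to be an embedding --- equivalently, by the open mapping theorem for Polish groups, for the image to be closed --- you must show that these stabilizers form a neighbourhood basis of the identity for the \emph{quotient} topology, i.e.\ that for every finite $F \sub M$ there is a finite $S \sub D$ with $G_{(A \cup S)} \sub H\,G_{(A \cup F)}$. Nothing in your proposal addresses this, and Roelcke precompactness, which must enter exactly here, is never used at this point. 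Indeed the claim is not a formal consequence of the set-up: let $G = \mathbb{Z}$ act by translation on $M = \mathbb{Z} \sqcup \bigsqcup_{n \ge 2} \mathbb{Z}/n\mathbb{Z}$ (a closed but not Roelcke precompact permutation group) and let $D = \bigsqcup_{n} \mathbb{Z}/n\mathbb{Z}$. Then $D$ is definable, algebraically closed and locally finite, $H = G_{(D)}$ is trivial, and your stabilizer observation holds verbatim, yet $G_D/H = \mathbb{Z} \to \Sym(D)$ has dense, non-closed image in a copy of $\widehat{\mathbb{Z}}$, is not an embedding, and conclusion \autoref{i:ps:2} of the lemma fails. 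So the closed-image/profiniteness claim is a genuine theorem requiring the Roelcke precompactness hypothesis; the paper obtains it by quoting \cite{Evans2016}*{Lemma~2.9} (topological faithfulness of $G_D/H \actson D$), and your proof is incomplete without an argument of that kind.

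Apart from this, your architecture is sound and diverges from the paper in an interesting way. Granting profiniteness of $G_{(A)}/H$ (equivalently of $G_D/H$, the index being finite), your deduction of \autoref{i:ps:2} --- a compact group acting continuously on the discrete set of $H$-orbits has finite orbits, combined with Roelcke precompactness to pass from $G$-orbits to $G_A$-orbits and $[G_A : G_{(A)}] < \infty$ --- is correct and is essentially the paper's own use of compactness, which it phrases via the finiteness of the set of $(H, G_A \cap G_D)$-double cosets. Your derivation of \autoref{i:ps:1} from \autoref{i:ps:2}, using normality of $H$ in $G_D$, the fact that a set with infinite $G_D$-orbit must meet $M \sminus D$, and the index identity $[G_{(A)} : H G_{(A \cup \set{b})}] = [G_{(A)} : G_{(A \cup \set{b})}] / [H : H_b]$, is correct and provides a self-contained alternative to the paper's citation of \cite{Evans2016}*{Lemma~2.4}. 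Once the topological faithfulness step is supplied (either by invoking \cite{Evans2016}*{Lemma~2.9} as the paper does, or by an explicit argument from Roelcke precompactness), the rest of your proof goes through.
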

\begin{proof}
  \autoref{i:ps:1} This follows from \cite{Evans2016}*{Lemma~2.4}, noting that for $K \leq G$ and $A \in \Fin(M)$, the orbit $K \cdot A$ is infinite iff there exists $a \in A$ with $K \cdot a$ infinite.

  \autoref{i:ps:2}
  Note that $H \lhd G_D$ and that $G_D/H$ acts faithfully on $D$ with finite orbits. It follows from \cite{Evans2016}*{Lemma~2.9} that the action $G_D/H \actson D$ is topologically faithful, so $G_D/H$ is a profinite group. As every $G$-orbit splits into finitely many $G_D$-orbits by Roelcke precompactness, it suffices to show that every $G_D$-orbit splits into finitely many $H$-orbits. Let $A \in \Fin(M)$ and $F \sub G_D$ be finite such that $G_D = HF(G_A \cap G_D)$. Then $F \cdot A$ is a set of representatives for all $H$-orbits in $G_D \cdot A$.
\end{proof}

Consider the quasi-order $\leq$ on $\Fin(M)$ defined by:
\begin{equation}
  \label{eq:quasi-order}
  A \leq B \iff A \sub \acl B \iff \acl A \sub \acl B \iff G_B \cdot A \text{ is finite}.
\end{equation}
The second equivalence follows from \autoref{l:acl}. 

This quasi-order gives rise to the equivalence relation $\sim$ on $\Fin(M)$ defined by:
\begin{align*}
  A \sim B \iff A \leq B \And B \leq A \iff \acl A = \acl B.
\end{align*}
At the level of stabilizers, $A \leq B$ iff $G_A \cap G_B$ has finite index in $G_B$, and $A \sim B$ iff $G_A$ and $G_B$ are commensurate. It follows from \autoref{eq:quasi-order} that for all $A, B \in \Fin(M)$,
\begin{equation}
  \label{eq:sub-implies-leq}
  A \sub B \implies A \leq B.
\end{equation}
For $A \in \Fin(M)$, we denote by $[A]$ the $\sim$-equivalence class of $A$.
If $A, B \in \Fin(M)$, we write $A < B$ if $A \leq B$ and $A \nsim B$.
\begin{theorem}
  \label{th:A-partial-order}
  Let $G$ be a non-archimedean, Roelcke precompact, Polish group and let $G \actson M$ be a continuous action on a discrete, countable set. Let the quasi-order $\leq$ be defined as in \autoref{eq:quasi-order}. Then the following hold:
  \begin{enumerate}
  \item \label{i:p:A-lattice} The partial order $(\Fin(M) / {\sim}, \leq)$ is a lattice.
  \item \label{i:p:A-wf} The relation $<$ on $\Fin(M)$ is well-founded.
  \end{enumerate}
\end{theorem}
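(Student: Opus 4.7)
For part~(i), I take $[A] \vee [B] = [A \cup B]$: this is an upper bound by \autoref{eq:sub-implies-leq}, and if $A, B \leq E$ then $A \cup B \sub \acl E$, so $A \cup B \leq E$. For meets, consider $D \coloneqq \acl A \cap \acl B$. Algebraic closedness and local finiteness are inherited by intersection, and $D$ is definable because the open subgroup $G_{A \cup B}$ is contained in $G_D$ (pointwise fixing $A \cup B$ setwise fixes both $\acl A$ and $\acl B$, hence also $D$). Part~(i) of \autoref{l:stab-loc-fin} then produces a finite $C \sub D$ with $\acl C = D$, and the equivalence $\acl E \sub D \iff \acl E \sub \acl A \And \acl E \sub \acl B$ identifies $[C]$ as $[A] \wedge [B]$.

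For part~(ii), I would argue by contradiction. Suppose $A_0 > A_1 > A_2 > \cdots$ is an infinite strictly descending chain; using part~(i) of \autoref{l:stab-loc-fin}, I may replace each $A_n$ by a finite witness with $G_{A_n} = G_{D_n}$, where $D_n \coloneqq \acl A_n$. Then $D_0 \supsetneq D_1 \supsetneq \cdots$ is a strictly decreasing sequence of algebraically closed, locally finite, definable subsets of $D_0$. The profinite quotient $\bar V \coloneqq G_{D_0}/G_{(D_0)}$ supplied by \autoref{l:pointwise-st} acts faithfully on the countable discrete set $D_0$; being compact, all of its orbits on $\Fin(D_0)$ are finite, so each $A_n$ has finite $\bar V$-orbit and its stabilizer in $\bar V$ is a finite-index open subgroup.

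The plan is then to extract a contradiction by constructing an ordinal-valued rank on the lattice of algebraically closed subsets of $D_0$---equivalently, by part~(i) of \autoref{l:stab-loc-fin}, on the set $\{[B] : B \leq A_0\}$---that is finite-valued and drops strictly along the chain. The key ingredient should be the self-commensuration property in part~(ii) of \autoref{l:stab-loc-fin}: a hypothetical tail on which the chain of stabilizers $\bar V \cap \bar G_{D_n}$ stabilizes would, by self-commensuration applied to each $G_{D_n}$, force $D_{n+1} = D_n$ in contradiction to strictness. The principal obstacle will be to make this rank rigorous; I expect the construction to proceed by induction on the finite orbits of $\bar V$ on $D_0$, combined with the observation that only finitely many $V$-orbits of minimal finite generating sets can produce the successive $D_n$.
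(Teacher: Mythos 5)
Your part~(i) is correct and is essentially the paper's argument: join $[A\cup B]$, meet via $D=\acl A\cap\acl B$ (definable since it contains the open pointwise stabilizer $G_{(A\cup B)}$ -- note you wrote the setwise stabilizer $G_{A\cup B}$ but your parenthetical correctly uses pointwise fixing), then \autoref{l:stab-loc-fin}~(i) to produce a finite representative.

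Part~(ii), however, is not a proof but a plan with an acknowledged hole, and the specific route you sketch does not look repairable as stated. First, a technical problem: the setwise stabilizers $G_{D_n}$ are \emph{not} contained in $G_{D_0}$ (an element fixing $A_n$ typically moves $A_0$, since $G_{A_n}\cdot A_0$ is infinite when $A_0>A_n$), so the groups ``$\bar V\cap\bar G_{D_n}$'' inside the profinite quotient $\bar V=G_{D_0}/G_{(D_0)}$ are not well-defined, and there is no nested chain of stabilizers whose stabilization you could feed into the self-commensuration property. Second, and more importantly, the compactness of $\bar V$ only controls orbits of subsets \emph{inside} $D_0$; these are automatically finite and carry no contradiction by themselves. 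The finiteness that actually does the work is of a different nature: every $G$-orbit in $\Fin(M)$ splits into only finitely many $G_{(D_0)}$-orbits (\autoref{l:pointwise-st}~(ii), which combines Roelcke precompactness with the profiniteness of $G_{D_0}/G_{(D_0)}$), and this concerns the orbit $G\cdot A_0$, which leaves $D_0$. The paper's proof exploits exactly this: setting $H_n=G_{(D_n)}$, so $H_0\leq H_1\leq\cdots$, one shows using \autoref{l:pointwise-st}~(i) (infinite $G_{D_{n+1}}$-orbits split into infinite $H_{n+1}$-orbits, applied to the infinite orbit of $A_n$) that $H_n\cdot A_0\subsetneq H_{n+1}\cdot A_0$ for every $n$, producing an infinite strictly increasing sequence of $H_0$-invariant subsets of the single orbit $G\cdot A_0$ -- contradicting the finite splitting. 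Your proposed ``finite-valued rank that drops along the chain'' on the algebraically closed subsets of $D_0$ is essentially a restatement of the well-foundedness you are trying to prove; without an argument producing such a rank (or some other global finiteness statement like \autoref{l:pointwise-st}~(ii)), the contradiction never materializes. To fix the proposal, replace the rank idea by the comparison of the $H_n$-orbits of $A_0$ against the finiteness of $H_0$-orbits on $G\cdot A_0$.
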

\begin{proof}
  \autoref{i:p:A-lattice} Let $A, B \in \Fin(M)$. The least upper bound $[A] \vee [B]$ is given by $[A \cup B]$. That $[A \cup B]$ is an upper bound follows from \autoref{eq:sub-implies-leq}. If $[C] \geq [A],[B]$, then $A \cup B \sub \acl C$, so $\acl(A \cup B) \sub \acl C$ by \autoref{l:acl}.

  For the greatest lower bound, let $D = \acl A \cap \acl B$ and note that $D$ is algebraically closed and locally finite. It is also definable because $G_{(A \cup B)}$ is a subgroup of $G_{D}$. Use \autoref{l:stab-loc-fin} to find a finite $D_0 \sub D$ with $D = \acl D_0$ and $G_D = G_{D_0}$. We show that $[D_0] = [A] \wedge [B]$. It is clear that $[D_0] \leq [A], [B]$. If $C \in \Fin(M)$ is such that $C \leq A, B$, then $C \sub \acl A \cap \acl B$, so $\acl C \sub \acl A \cap \acl B = \acl D_0$, and it follows that $C \leq D_0$.

  \autoref{i:p:A-wf} Suppose, towards a contradiction, that there exist $A_0, A_1, \ldots \in \Fin(M)$ with $A_0 > A_1 > \cdots$. Let $D_i = \acl A_i$ and note that $D_i \supseteq D_{i+1}$ and $A_i \sub D_j$ for all $i \geq j$. Let $H_i = G_{(D_i)}$, so that $H_0 \leq H_1 \leq \cdots$.

  We prove that for every $i$, the inclusion $H_i \cdot A_0 \sub H_{i+1} \cdot A_0$ is proper. By assumption, $G_{A_{i+1}} \cdot A_i$ is infinite, so $H_{i+1} \cdot A_i$ is also infinite by \autoref{l:pointwise-st} \autoref{i:ps:1}. On the other hand, $A_i \sub \acl A_0$, so $G_{A_0} \cdot A_i$ is finite. Hence there exists $g \in H_{i+1}$ with $g^{-1} \cdot A_i \notin G_{A_0} \cdot A_i$. We will show that $g \cdot A_0 \notin H_i \cdot A_0$. Suppose, to the contrary, that there is $h \in H_i$ with $h \cdot A_0 = g \cdot A_0$. Then $g^{-1}h \in G_{A_0}$ and $g^{-1}h \cdot A_i = g^{-1} \cdot A_i$, contradicting the fact that $g^{-1} \cdot A_i \notin G_{A_0} \cdot A_i$.

  We obtained that $H_0 \cdot A_0 \subsetneq H_1 \cdot A_0 \subsetneq \cdots$ is an infinite strictly increasing sequence of $H_0$-invariant subsets of $G \cdot A_0$, which contradicts \autoref{l:pointwise-st} \autoref{i:ps:2}.
\end{proof}

\subsection{The universal action and unitary representations}
\label{sec:unit-repr}

Let $G$ be a Roelcke precompact, non-archimedean, Polish group. Among all actions of $G$ on a countable set, there is a universal one that can be constructed as follows. Let $(V_i : i \in I)$ be a collection of representatives for the equivalence relation of conjugacy on open subgroups of $G$. Note that the set $I$ is countable by \cite{Tsankov2012}*{Corollary~2.5}. Let
\begin{equation*}
  \cU = \bigsqcup_i G/V_i
\end{equation*}
and equip it with the left translation action. 
Note that for any open $V \leq G$, there exists $a \in \cU$ with $V = G_a$. This implies, in particular, that the action $G \actson \cU$ is topologically faithful (see \cite{Evans2016}*{Lemma~1.9}).
This action is universal in the sense that every continuous, transitive action of $G$ on a countable set embeds into it and it is minimal with this property. If one wants to embed any continuous action of $G$ on a countable set, one has to take infinitely many copies of $\cU$.

If $G$ is the automorphism group of an $\aleph_0$-categorical structure $M$, then the set $\cU$ can be constructed model-theoretically essentially as the structure $M\eq$. 

The universal action has canonical representatives for the $\sim$-equivalence classes discussed in the previous subsection. Define $s \colon \Fin(\cU) \to \cU$ by
\begin{equation*}
  s(A) = \text{ the unique $a \in \cU$ such that } G_a = G_{\acl A}.
\end{equation*}
Indeed, by \autoref{l:acl} and \autoref{l:stab-loc-fin}, $G_{\acl A}$ is an open subgroup of $G$. Let $V_i$ be the representative of its conjugacy class that appears in the definition of $\cU$ and let $g \in G$ be such that $gV_ig^{-1} = G_{\acl A}$. Then one can take $a = g V_i$. To verify uniqueness, note that $G_{gV_i} = g V_i g^{-1}$. By the construction of $\cU$, if $G_{gV_i} = G_{hV_j}$, then $V_i = V_j$ and $h^{-1} g \in N_G(V_i)$. By \autoref{l:stab-loc-fin}, $V_i$ is self-normalizing, so $hV_i = gV_i$.

It is easy to check that $A \sim s(A)$ for all $A \in \Fin(\cU)$ and that $s$ is idempotent in the sense that $s(\set{s(A)}) = s(A)$ for all $A \in \Fin(M)$. Thus the set
\begin{equation*}
  \cA \coloneqq \set{s(A) : A \in \Fin(\cU)},
\end{equation*}
which we also identify with a subset of $\Fin(\cU)$ by confounding $a$ with $\set{a}$, is a complete section for $\sim$ and $(\cA, \leq) \cong (\Fin(\cU) / {\sim}, \leq)$.

We recall that a \df{unitary representation} of $G$ is a continuous action on a complex Hilbert space $\cH$ by unitary isomorphisms.
If $G \actson M$ is any continuous action on a countable set, we have a natural representation $G \actson \ell^2(M)$. The representation $G \actson \ell^2(\cU)$ is universal in the following sense.
\begin{fact}[\cite{Tsankov2012}*{Theorem~4.2}, \cite{Jahel2022}*{Fact~3.1}]
  \label{f:urep}
  Let $G$ be a Roelcke precompact, non-archimedean, Polish group, and let $G \actson \cU$ be its universal action. Then every unitary representation of $G$ is isomorphic to a subrepresentation of a direct sum of copies of $\ell^2(\cU)$.
\end{fact}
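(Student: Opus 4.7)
The plan is to establish the fact in three steps, following the strategy laid out in \cite{Tsankov2012}: first prove that vectors fixed by some open subgroup are dense in every unitary representation of $G$, second realize each cyclic representation generated by such a vector as a subrepresentation of some $\ell^2(G/V)$, and third assemble these via Zorn's lemma.

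The key density statement is that for any unitary representation $\pi \colon G \to U(\cH)$, the subspace $\bigcup_{V} \cH^V$ (union over open subgroups $V$ of $V$-fixed vectors) spans a dense subspace of $\cH$. This is where Roelcke precompactness enters essentially: for every open $V \leq G$, the double coset space $V \backslash G / V$ is finite, so the Hecke $*$-algebra $\C[V \backslash G / V]$ is finite-dimensional. Acting on $\cH^V$ through $\pi$, it produces many projections whose images lie in $\cH^V$. To prove density, I would suppose $w \in \cH$ is orthogonal to all $\cH^V$ and consider the positive definite function $\phi(g) = \langle \pi(g) w, w \rangle$; continuity together with the $VFV$-decomposition for any open $V$ should force $\phi$ to vanish on a neighborhood of the identity after suitable averaging over the finite double coset representatives, and a standard GNS argument then yields $w = 0$. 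Alternatively, one invokes the fact that the Roelcke compactification $R(G)$ is a compact $G$-biset, so the left regular representation on $L^2(R(G), \nu)$ for a suitable $G$-invariant measure decomposes into finite-dimensional pieces, and compares $\pi$ to $L^2(R(G))$.

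For the second step, fix a nonzero $v \in \cH^V$ with $V$ open. Define the matrix coefficient map
\begin{equation*}
  T_v \colon \cH \to \C^{G/V}, \qquad T_v(w)(gV) \coloneqq \langle w, \pi(g) v \rangle,
\end{equation*}
which is well-defined by $V$-invariance of $v$ and intertwines $\pi$ with the left regular representation on $\C^{G/V}$. I would then argue that $T_v(w) \in \ell^2(G/V)$ for $w$ in the cyclic subrepresentation generated by $v$. Using Roelcke precompactness, the cyclic subrepresentation $\cH_v$ splits as a direct sum of irreducible representations, each of which has finite-dimensional $V$-fixed subspace (admissibility, coming from finiteness of $V\backslash G/V$ and the Hecke-algebra action), and on each irreducible piece the square-integrability of the matrix coefficient on $G/V$ follows. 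Picking an index $i \in I$ such that $V$ is conjugate to $V_i$, the copy of $\ell^2(G/V)$ is isomorphic to $\ell^2(G/V_i)$, which is a direct summand of $\ell^2(\cU)$.

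Finally, apply Zorn's lemma to obtain a maximal orthogonal family $(\cH_\alpha)_{\alpha \in A}$ of cyclic subrepresentations of $\cH$, each generated by some vector with open stabilizer. By the second step, $\bigoplus_\alpha \cH_\alpha$ embeds equivariantly into a direct sum of copies of $\ell^2(\cU)$. Its orthogonal complement in $\cH$ is a $G$-invariant subspace containing no nonzero vector with open stabilizer, hence, by the density result, is zero. The main obstacle is the first step: proving density of vectors with open stabilizers uses the full force of Roelcke precompactness together with a careful analysis of matrix coefficients, and the $\ell^2$-integrability in the second step depends on the admissibility of unitary representations of $G$, which is itself a consequence of the finiteness of double coset spaces.
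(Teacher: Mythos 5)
The paper does not prove this Fact --- it is quoted from \cite{Tsankov2012} and \cite{Jahel2022} --- so what you are really attempting is a reconstruction of the proof in \cite{Tsankov2012}, and the step that carries all the real content, your step 2, is false as stated. The claim that for a nonzero $v \in \cH^V$ the coefficient map $T_v$ sends the cyclic subrepresentation generated by $v$ into $\ell^2(G/V)$ fails already for the trivial representation: take $G = \Sym(\N)$, $V = G_1$ the stabilizer of a point, $\cH$ the one-dimensional trivial representation and $v$ a unit vector; then $T_v(v)(gV) = \nm{v}^2$ is constant on the infinite set $G/V$, hence not square-summable. The same example defeats your fallback via admissibility: the trivial representation is irreducible and has a one-dimensional space of $V$-fixed vectors, yet its matrix coefficients on $G/V$ are not in $\ell^2(G/V)$. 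More generally, any cyclic subrepresentation containing a vector fixed by a subgroup much larger than $V$ (e.g.\ a $G$-invariant vector) cannot embed into $\ell^2(G/V)$, so no choice-free coefficient map relative to the original $V$ can work. The coset space has to be adapted to each irreducible constituent: the actual argument of \cite{Tsankov2012} shows that every irreducible representation is of the form $\Ind_H^G(\sigma)$ with $H$ open and $\sigma$ factoring through a finite quotient $H/N$, whence it embeds into $\Ind_H^G(\ell^2(H/N)) \cong \ell^2(G/N)$, and $N$ is conjugate to some $V_i$, giving a summand of $\ell^2(\cU)$. Identifying this subgroup $N$ (in the spirit of the minimal/self-commensurating stabilizers of \autoref{l:stab-loc-fin}) is precisely the non-trivial part of the classification and is not recovered by the naive map $T_v$; your sketch gives no mechanism for producing it.

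Your step 1 also misplaces the difficulty. Density of vectors with open stabilizer is not where Roelcke precompactness enters: it holds for every non-archimedean group and is exactly \autoref{l:dense-fixed-basis} (choose an open $V$ with $\nm{\pi(v)\xi - \xi}$ small for all $v \in V$ and take the minimal-norm point of the closed convex hull of $V \cdot \xi$). Both of your proposed arguments for it are weaker than this elementary one: the positive-definite-function route is left at ``should force,'' and the alternative via $L^2$ of the Roelcke compactification presupposes a $G$-invariant measure there and a decomposition into finite-dimensional pieces, neither of which is available --- indeed, as the introduction of the paper points out, irreducible representations of these groups are usually infinite-dimensional. Step 3 (Zorn's lemma) is standard and fine, and the decomposition of a cyclic representation with cyclic $V$-fixed vector into finitely many irreducibles via the finite-dimensional Hecke algebra is a reasonable outline; but without a correct replacement for step 2 the proposal does not prove the Fact.
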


Let now $G \actson \cH$ be a unitary representation of $G$. For $a \in \cA$, we denote
\begin{equation*}
  \cH_a = \cl{\set{\xi \in \cH : G_a \cdot \xi \text{ is finite}}}.
\end{equation*}
It is clear that $\cH_a$ is a closed subspace of $\cH$ and that $\cH_a \sub \cH_b$ for $a \leq b$.

If $\cH_1, \cH_2, \cH_3$ are closed subspaces of a Hilbert space $\cH$ with $\cH_2 \sub \cH_1 \cap \cH_3$, we write
\begin{equation*}
  \cH_1 \Perp[\cH_2] \cH_3
\end{equation*}
if the orthogonal complements of $\cH_2$ in $\cH_1$ and in $\cH_3$ are orthogonal. If $p_1, p_2, p_3$ denote the orthogonal projections on $\cH_1, \cH_2, \cH_3$, respectively, this is equivalent to $p_3p_1 = p_2p_1$.

The next proposition is similar to \cite{Jahel2022}*{Proposition~3.2}; see also \cite{BenYaacov2018}. 
\begin{prop}
  \label{p:unitary-rep}
  Let $G$ be a non-archimedean, Roelcke precompact, Polish group and let $G \actson \cU$ be the universal action of $G$. Let $G \actson^\pi \cH$ be a unitary representation. Then, for all $a, b \in \cA$,
  \begin{equation}
    \label{eq:p:ur}
    \cH_a \Perp[\cH_{a \wedge b}] \cH_b.
  \end{equation}
\end{prop}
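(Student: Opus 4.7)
The plan is to use Fact~\ref{f:urep} to reduce to the universal representation: since every unitary representation of $G$ embeds $G$-equivariantly into a Hilbert direct sum of copies of $\ell^2(\cU)$, it suffices to verify \autoref{eq:p:ur} for $\cH = \ell^2(\cU)$ and to check that the relation is inherited by Hilbert direct sums and by closed $G$-invariant subspaces.

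For $\ell^2(\cU)$ I would first identify $\cH_c = \ell^2(\acl c)$ for each $c \in \cA$. The inclusion $\supseteq$ is immediate since every basis vector $\delta_d$ with $d \in \acl c$ has finite $G_c$-orbit. For the converse, if $\xi$ has finite $G_c$-orbit, its stabilizer $K$ in $G_c$ is closed of finite index and hence open (its complement in $G_c$ is a finite union of closed cosets); square-summability of $\xi$ then forces every point in its support to have finite $K$-orbit, and since $[G_c : K] < \infty$ those points all lie in $\acl c$. Recalling from the proof of \autoref{th:A-partial-order}\autoref{i:p:A-lattice} that the representative $a \wedge b \in \cA$ satisfies $\acl(a \wedge b) = \acl a \cap \acl b$, we obtain $\cH_{a \wedge b} = \ell^2(\acl a \cap \acl b)$. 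The three projections are coordinate projections onto $\ell^2$-subspaces of $\cU$ and therefore satisfy the stronger identity $p_a p_b = p_{a \wedge b}$, which implies \autoref{eq:p:ur}.

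Hilbert direct sums $\cH = \bigoplus_i \cH^{(i)}$ pose no difficulty: the component projections are $G$-equivariant, so $\cH_c = \bigoplus_i \cH^{(i)}_c$, and \autoref{eq:p:ur} decomposes across the summands. For a closed $G$-invariant subspace $\cK \leq \cH$ with orthogonal projection $q \in \pi(G)'$, the $G$-equivariance of $q$ cannot enlarge $G_c$-orbits of vectors, whence $\cK_c = \cK \cap \cH_c$. The key additional observation is that the projection $p^\cH_c$ onto $\cH_c$ preserves $\cK$: indeed, $\cH_c$ is the closed linear span of the fixed-point subspaces $\set{\xi \in \cH : \pi(k)\xi = \xi \text{ for all } k \in K}$ as $K$ ranges over open finite-index subgroups of $G_c$, and any $T \in \pi(G)'$ preserves each such subspace (for $\xi$ in the subspace and $k \in K$, $\pi(k)T\xi = T\pi(k)\xi = T\xi$); hence $T$ preserves $\cH_c$, so $p^\cH_c$ commutes with $q$. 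Applying this with $c = a \wedge b$: if $\xi \in \cK_a$ and $\xi \perp \cK_{a \wedge b}$, then $p^\cH_{a \wedge b}\xi \in \cK \cap \cH_{a \wedge b} = \cK_{a \wedge b}$, so $\|p^\cH_{a \wedge b}\xi\|^2 = \langle \xi, p^\cH_{a \wedge b}\xi \rangle = 0$. Thus $\xi \in \cH_a \ominus \cH_{a \wedge b}$, which by the case of $\ell^2(\cU)$ is perpendicular to $\cH_b \supseteq \cK_b$.

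The main obstacle I anticipate is the subrepresentation step: because $\cH_c$ is only $G_c$-invariant rather than $G$-invariant, the projection $p^\cH_c$ a priori lies only in $\pi(G_c)'$, and the fact that it nonetheless commutes with projections onto arbitrary closed $G$-invariant subspaces has to be extracted indirectly via the decomposition into fixed-point subspaces of finite-index open subgroups of $G_c$ (equivalently, via an SOT-approximation by their projections).
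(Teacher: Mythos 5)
Your proposal is correct and follows essentially the same route as the paper: identify $\cH_c = \ell^2(\acl c)$ for the universal representation, note that the coordinate projections satisfy $p_a p_b = p_{a \wedge b}$, and then transfer the relation through direct sums and subrepresentations using \autoref{f:urep}. Your treatment of the subrepresentation step (showing that $p^{\cH}_c$ commutes with the projection onto $\cK$ via the fixed-point subspaces of open finite-index subgroups of $G_c$) is only a mild repackaging of the paper's observation that $\cH_a$ splits as $\cK_a \oplus (\cK^{\perp})_a$, so nothing essential differs.
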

\begin{proof}
  First we check the condition for $\cH = \ell^2(\cU)$. The main observation is that
  \begin{equation*}
    \cH_a = \set{f \in \ell^2(\cU) : \supp f \sub \acl a}.
  \end{equation*}
  Notice that the subspace on the right-hand side is closed.
  To see the $\sub$ inclusion, let $f \in \cH_a$ be such that $G_a \cdot f$ is finite. Then there is an open finite-index subgroup $V \leq G_a$ which fixes $f$. If we suppose that there is $b \in M \sminus \acl a$ such that $f(b) \neq 0$, we have that $f$ is constant on $V \cdot b$, which is infinite, and this contradicts the fact that $f \in \ell^2(\cU)$. Conversely, write $\acl a = \bigcup_i A_i$, where each $A_i$ is finite, $G_a$-invariant, and $A_i \sub A_{i+1}$. Then for each $f$ with $\supp f \sub \acl a$, we have that $f \chi_{A_i} \to f$ and $G_a \cdot (f \chi_{A_i})$ is finite for each $i$.
  Now, recalling that $\acl(a \wedge b) = \acl a \cap \acl b$, the conclusion is clear for $\cH = \ell^2(\cU)$.
  
  From \autoref{f:urep}, we have that $\pi$ is a subrepresentation of a direct sum of copies of $\ell^2(\cU)$.
  If $\cK$ and $\cH$ are two representations and $a \in \cA$, then $(\cH \oplus \cK)_a = \cH_a \oplus \cK_a$, so it is clear that \eqref{eq:p:ur} passes to direct sums. To see that it passes to subrepresentations, denote by $P^\cH_a$ the projection onto $\cH_a$ and observe that \eqref{eq:p:ur} can be equivalently written as $P^\cH_b P^\cH_a = P^\cH_{a \wedge b} P^{\cH}_a$. Let now $\cK \sub \cH$ be a subrepresentation and suppose that \eqref{eq:p:ur} holds for $\cH$. Then $\cH_a = \cK_a \oplus \cK_a^\perp$, and similarly for $b$ and $a \wedge b$. Then we have, for any $\xi \in \cK$,
  \begin{equation*}
    P^\cK_b P^\cK_a \xi = P^\cH_b P^\cH_a \xi = P^\cH_{a \wedge b} P^\cH_a \xi = P^\cK_{a \wedge b} P^\cK_a \xi. \qedhere
  \end{equation*}
\end{proof}

We conclude this section with a variant of a well-known lemma. First, we have the following (see, e.g., \cite{Tsankov2012}*{Lemma~3.1}).
\begin{lemma}
  \label{l:dense-fixed-basis}
  Let $G$ be a non-archimedean group and let $G \actson \cH$ be a unitary representation. Let $(V_i : i \in I)$ be a basis of open subgroups at $1_G$. Then the set
  \begin{equation*}
    \set{\xi \in \cH : \exists i \in I \ V_i \cdot \xi = \xi}
  \end{equation*}
  is dense in $\cH$.
\end{lemma}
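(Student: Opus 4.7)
My plan is to approximate any given $\xi \in \cH$ by its orthogonal projection onto the subspace of vectors fixed by a sufficiently small $V_i$. Given $\xi \in \cH$ and $\epsilon > 0$, continuity of the representation at $\xi$, combined with the hypothesis that $(V_i)$ is a neighborhood basis at $1_G$ by open subgroups, provides $i$ with $\|v\xi - \xi\| < \epsilon$ for every $v \in V_i$. Let $\cK \sub \cH$ denote the closed subspace of $V_i$-fixed vectors and $P$ the orthogonal projection onto $\cK$; then $P\xi \in \cK$ is $V_i$-invariant, and it remains to show $\|P\xi - \xi\| \leq \epsilon$.

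The key step will be to show that $P\xi$ lies in the closed convex hull $C$ of the orbit $V_i \cdot \xi$. Granting this, any element of the (unclosed) convex hull has the form $\sum_j \lambda_j v_j \xi$ with $\lambda_j \geq 0$, $\sum_j \lambda_j = 1$, and $v_j \in V_i$, hence lies at distance at most $\sum_j \lambda_j \|v_j \xi - \xi\| < \epsilon$ from $\xi$; passing to the closure yields $\|P\xi - \xi\| \leq \epsilon$, as required.

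To prove $P\xi \in C$, I will decompose $\xi = P\xi + \xi'$ with $\xi' \in \cK^\perp$, noting that $\cK^\perp$ is $V_i$-invariant because the action is unitary. Then $v\xi = P\xi + v\xi'$ for $v \in V_i$, and therefore $C = P\xi + D$, where $D$ denotes the closed convex hull of $V_i \cdot \xi'$. The problem reduces to showing $0 \in D$, which is the main---and essentially only non-trivial---step. This is a standard Alaoglu--Birkhoff style fixed-point argument in Hilbert space: $D$ is a closed, bounded, convex, $V_i$-invariant subset of $\cH$, hence admits a unique element of minimum norm by uniform convexity; since $V_i$ acts isometrically and preserves $D$, this element is $V_i$-fixed by uniqueness; but $D \sub \cK^\perp$, and $\cK^\perp$ contains no nonzero $V_i$-fixed vector, so the minimum-norm element of $D$ must be $0$. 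Everything outside this minimum-norm / fixed-point argument is routine continuity and orthogonality bookkeeping.
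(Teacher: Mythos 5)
Your proof is correct and is essentially the standard Alaoglu--Birkhoff-type argument that the paper invokes by citing \cite{Tsankov2012}*{Lemma~3.1}: the choice of $V_i$ by strong continuity, the unique minimal-norm element of a closed, bounded, convex, $V_i$-invariant set being fixed, and $\cK \cap \cK^\perp = \set{0}$ are all used correctly. The only remark is that the detour through the projection $P$ is unnecessary: the minimal-norm element of the closed convex hull of $V_i \cdot \xi$ is itself $V_i$-fixed and lies within $\eps$ of $\xi$, which already yields the conclusion.
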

\begin{cor}
  \label{c:Ha-dense}
  Let $G$ be a Roelcke precompact, non-archimedean, Polish group, and let $G \actson \cH$ be a unitary representation. Let the set $\cA$ and the subspaces $\cH_a$ for $a \in \cA$ be defined as above. Then $\bigcup_{a \in \cA} \cH_a$ is dense in $\cH$.
\end{cor}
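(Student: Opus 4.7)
The plan is to reduce the statement, via \autoref{l:dense-fixed-basis}, to showing that any vector fixed by an open subgroup lies in some $\cH_a$ with $a \in \cA$. More precisely, fix a basis $(V_i : i \in I)$ of open subgroups at $1_G$; then \autoref{l:dense-fixed-basis} tells us that the union over $i$ of the $V_i$-fixed vectors is dense in $\cH$, so it suffices to prove that every $\xi \in \cH$ fixed by some open subgroup $V \leq G$ belongs to $\cH_a$ for some $a \in \cA$.

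Given such $\xi$ and $V$, I would use the description of the universal action $\cU = \bigsqcup_i G/V_i$: conjugating $V$ to some $V_i$, there exists an element $a_0 \in \cU$ with $G_{a_0} = V$. The naive candidate $a_0$ may not lie in $\cA$, so I pass to the canonical representative $a \coloneqq s(\set{a_0}) \in \cA$, whose stabilizer is, by the definition of $s$, the setwise stabilizer $G_a = G_{\acl \set{a_0}}$.

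The crucial step is to show that $V = G_{a_0}$ is a finite-index subgroup of $G_a$. For every $g \in G_a$, since $g$ preserves $\acl \set{a_0}$ setwise, we have $g \cdot a_0 \in \acl \set{a_0} \cap (G \cdot a_0)$; but the latter is finite by the local finiteness part of \autoref{l:acl}. Hence the orbit $G_a \cdot a_0$ is finite, which means exactly that $[G_a : V] < \infty$. Since $V$ fixes $\xi$, the orbit $G_a \cdot \xi$ then has at most $[G_a : V]$ elements, in particular it is finite, so $\xi \in \cH_a$ by definition of $\cH_a$, completing the proof.

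The whole argument is short and the only mild obstacle is recognising that one should not try to witness $\xi$ inside $\cH_{a_0}$ directly (since $a_0 \notin \cA$ in general), but rather pass to the canonical representative $s(\set{a_0})$ and verify, via local finiteness of the algebraic closure, that this canonical representative only enlarges the stabilizer by a finite factor.
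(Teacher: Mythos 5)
Your proposal is correct and follows essentially the same route as the paper: approximate by vectors fixed by an open subgroup via \autoref{l:dense-fixed-basis}, realize that subgroup as a point stabilizer $G_b$ in $\cU$, pass to the canonical representative $a = s(\set{b}) \in \cA$, and use that $G_b$ has finite index in $G_a$ (which the paper gets from commensurability of $G_a$ and $G_b$, and you verify directly via local finiteness of $\acl$) to conclude the vector has finite $G_a$-orbit and hence lies in $\cH_a$.
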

\begin{proof}
  Let $\xi \in \cH$ be arbitrary and let $\eps > 0$. Recalling that every open subgroup of $G$ is the stabilizer of some point in $\cU$, by \autoref{l:dense-fixed-basis}, there exists $\xi_0 \in \cH$ and $b \in \cU$ such that $G_b \cdot \xi_0 = \xi_0$ and $\nm{\xi - \xi_0} < \eps$. Let $a \in \cA$ be such that $\acl a = \acl b$. Then $G_a \cap G_b$ has finite index in $G_b$, so $G_a \cdot \xi_0$ is finite and $\xi_0 \in \cH_a$.
\end{proof}


\section{Non-singular actions of Roelcke precompact, non-archimedean groups}
\label{sec:non-singular-actions-1}

\subsection{Non-singular actions}
\label{sec:non-singular-actions}

Let $(\Omega, \cB, \mu)$ be a $\sigma$-finite measure space and let $\cN = \set{B \in \cB : \mu(B) = 0}$ be its null ideal. The \df{measure algebra} $\MALG(\mu)$ is the quotient Boolean algebra $\cB / \cN$. We denote by $\Aut^*(\mu)$ the group of automorphisms of the Boolean algebra $\MALG(\mu)$. Another way to view $\Aut^*(\mu)$ is as the group of all bi-measurable bijections $g$ of $\Omega$ which preserve the null ideal (or, equivalently, such that $g_*\mu$ is equivalent to $\mu$), where two such bijections are identified if they are equal on a co-null set. The $*$ in the notation $\Aut^*(\mu)$ is there to distinguish it from its subgroup $\Aut(\mu)$ consisting of the transformations which also preserve the measure $\mu$. Note that $\Aut^*(\mu)$ only depends on the measure class of $\mu$; in particular, we can always replace $\mu$ by a probability measure.

$\Aut^*(\mu)$ embeds into the unitary group $U(L^2(\Omega))$ via its \df{Koopman representation}:
\begin{equation}
  \label{eq:koopman}
  (g \cdot f)(x) = \Big( \frac{\ud g_* \mu}{\ud \mu} \Big)^{1/2}(x) f(g^{-1} \cdot x), \quad \text{ for } g \in \Aut^*(\mu), f \in L^2(\Omega),
\end{equation}
where $\frac{\ud g_* \mu}{\ud \mu}$ denotes the Radon--Nikodym derivative. The Hilbert space $L^2(\Omega)$ is also endowed with the structure of a (complex) Banach lattice, where the positive elements are the positive functions and the elements of the image of the embedding of $\Aut^*(\mu)$ are precisely the elements of $U(L^2(\Omega))$ preserving the order. Thus the image of $\Aut^*(\mu)$ in $U(L^2(\Omega))$ is closed and we equip $\Aut^*(\mu)$ with the group topology coming from this embedding.

If $f \in L^2(\Omega)$, we denote by $\supp f \in \MALG(\mu)$ the \df{support} of $f$, which is the set of $x \in \Omega$ such that $f(x) \neq 0$. Note that if $f_1, f_2 \in L^2(\Omega)$ are positive, then
\begin{equation*}
  f_1 \perp f_2 \iff \supp f_1 \cap \supp f_2 = \emptyset.
\end{equation*}

\begin{defn}
  \label{df:non-singular}
  Let $G$ be a topological group. A \df{non-singular action} of $G$ on the $\sigma$-finite measure space $(\Omega, \cB, \mu)$ is a continuous homomorphism $G \to \Aut^*(\mu)$. The action is called \df{ergodic} if it has no fixed points in $\MALG(\mu)$ apart from $\bZero$ and $\bOne$. The action \df{preserves} the measure $\mu$ (as opposed to just the measure class) if the image of the homomorphism is in the subgroup $\Aut(\mu)$, i.e., $\mu(g \cdot E) = \mu(E)$ for all $E \in \MALG(\mu)$.
\end{defn}

\begin{remark}
  \label{rem:boolean-actions}
  An action on $\MALG(\mu)$ as above is often called in the literature a \df{Boolean action}. It is clear that any pointwise Borel action of $G$ on $\Omega$ such that $g_*\mu \sim \mu$ for all $g \in G$ gives rise to a Boolean action. The converse, for pmp actions and non-archimedean Polish groups, is also true, as shown by Glasner and Weiss in \cite{Glasner2005a}*{Theorem~2.3}. \autoref{th:non-singular-nA-RP} below implies that the same holds for non-singular actions of Roelcke precompact, non-archimedean, Polish groups.
\end{remark}

The following lemma gives a simple criterion for a non-singular action to actually preserve a measure.
\begin{lemma}
  \label{l:fixed-vector}
  Let $G \actson (\Omega, \cB, \mu)$ be a non-singular action. Then the following are equivalent:
  \begin{enumerate}
  \item The action $G \actson L^2(\Omega)$ has a non-zero fixed vector;
  \item There is a probability measure $\nu \prec \mu$, which is $G$-invariant.
  \end{enumerate}
\end{lemma}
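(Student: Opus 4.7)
The plan is to use the Koopman formula \eqref{eq:koopman} to translate between $G$-invariance of a measure $\nu \prec \mu$ and $G$-invariance of a vector in $L^2$, via the correspondence $\ud\nu/\ud\mu \leftrightarrow f^2$.

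For the direction (ii)$\Rightarrow$(i), I would take $h = \ud\nu/\ud\mu \in L^1(\mu)$, which is non-negative, and show that $f \coloneqq \sqrt{h} \in L^2(\mu)$ is a fixed vector for the Koopman representation. The $G$-invariance $g_*\nu = \nu$ reads, after changing variables with respect to $\mu$, as
\begin{equation*}
  h(x) = h(g^{-1} \cdot x) \cdot \RaNi{g_*\mu}{\mu}(x) \quad \text{for $\mu$-a.e. $x$},
\end{equation*}
and taking square roots gives exactly $(g \cdot f)(x) = f(x)$ by the Koopman formula \eqref{eq:koopman}. Since $\nu$ is a non-zero measure, $f$ is a non-zero vector in $L^2(\Omega)$.

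For the direction (i)$\Rightarrow$(ii), given a non-zero fixed $f \in L^2(\Omega)$, I would first reduce to the case $f \geq 0$. This uses the fact, recalled in the paragraph around \eqref{eq:koopman}, that the image of $\Aut^*(\mu)$ in $U(L^2(\Omega))$ consists of Banach lattice automorphisms; since the Radon--Nikodym derivative is positive, the Koopman action satisfies $g \cdot |f| = |g \cdot f|$, and hence $|f|$ is also a non-zero fixed vector. Setting
\begin{equation*}
  \nu \coloneqq \frac{1}{\nm{f}_2^2} \, f^2 \cdot \mu,
\end{equation*}
$\nu$ is manifestly a probability measure with $\nu \prec \mu$. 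To check $G$-invariance, I would compute $g_*\nu(E) = \int_E f^2(g^{-1} \cdot x) \RaNi{g_*\mu}{\mu}(x) \, \ud\mu(x)$ and use that squaring the fixed-vector identity $(g \cdot f)(x) = f(x)$ gives the cocycle-type relation $f^2(g^{-1} \cdot x) \cdot \RaNi{g_*\mu}{\mu}(x) = f^2(x)$, which yields $g_*\nu = \nu$.

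There is no real obstacle here: the lemma is essentially a bookkeeping exercise relating the square-root-of-Radon--Nikodym twist built into the Koopman formula with invariance at the level of measures. The only point that requires a moment of care is the reduction to a positive fixed vector in the (i)$\Rightarrow$(ii) direction, which is why the Banach lattice viewpoint on $\Aut^*(\mu)$ set up before the lemma is useful.
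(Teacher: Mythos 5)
Your proof is correct and follows essentially the same route as the paper: both directions go through the correspondence $\RaNi{\nu}{\mu} \leftrightarrow |f|^2$ and the Radon--Nikodym chain rule built into the Koopman formula \eqref{eq:koopman}, with the reduction to the positive fixed vector $|f|$ handled exactly as in the paper via the order-preserving (Banach lattice) nature of the Koopman operators.
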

\begin{proof}
  \begin{cycprf}
  \item[\impnext] Suppose that $f_0 \in L^2(\Omega)$ with $\nm{f_0} = 1$ is fixed. Then $|f_0|$ is also fixed and positive of norm $1$. Let $\nu$ be the probability measure defined by $\RaNi{\nu}{\mu} = |f_0|^2$. Then, for every $g \in G$, using the equation \autoref{eq:koopman}, we have
    \begin{equation*}
      \Big( \RaNi{\nu}{\mu} \Big)^{1/2} = \Big( \RaNi{g_* \mu}{\mu} \Big)^{1/2} \Big( \RaNi{g_* \nu}{g_*\mu} \Big)^{1/2} = \Big( \RaNi{g_* \nu}{\mu} \Big)^{1/2},
    \end{equation*}
    showing that $\nu$ is invariant.
    
  \item[\impfirst]
    We will check that $\big(\RaNi{\nu}{\mu}\big)^{1/2} \in L^2(\Omega)$ is invariant. Indeed,
    \begin{equation*}
      \begin{split}
        \Big( g \cdot \Big(\RaNi{\nu}{\mu}\Big)^{1/2} \Big)(x)
          &= \Big(\RaNi{g_* \mu}{\mu}\Big)^{1/2}(x) \Big(\RaNi{\nu}{\mu}\Big)^{1/2}(g ^{-1} \cdot x) \\
          &= \Big(\RaNi{g_* \mu}{\mu}\Big)^{1/2}(x) \Big(\RaNi{g_* \nu}{g_* \mu}\Big)^{1/2}(x) \\
          &= \Big(\RaNi{g_* \mu}{\mu}\Big)^{1/2}(x) \Big(\RaNi{\nu}{g_* \mu}\Big)^{1/2}(x) = \Big(\RaNi{\nu}{\mu}\Big)^{1/2}(x).
            \qedhere
      \end{split}
    \end{equation*}
  \end{cycprf}
\end{proof}

If $G \actson^{\alpha_i} (\Omega_i, \cB_i, \mu_i)$, $i \in I$ is a countable family of non-singular actions, one can form the \df{disjoint union action} on $\bigsqcup_i \Omega_i$ with measure $\sum_i \mu_i$ defined by:
\begin{equation*}
  g \cdot \bigsqcup_i A_i = \bigsqcup_i g \cdot A_i.
\end{equation*}
Note that this action is never ergodic unless possibly if $|I| = 1$.

\subsection{Induced actions}
\label{sec:induced-actions}

In this subsection, we recall the standard construction of induction.
Let $G$ be a topological group and let $V \leq G$ be an open subgroup of countable index. We denote by $\xi$ the counting measure on $G/V$.
If $V \actson^\sigma (\Omega, \cB, \mu)$ is a non-singular action, we can construct an action of $G$ on $(G/V \times \Omega, \cP(G/V) \otimes \cB, \xi \otimes \mu)$ as follows. Let $s \colon G/V \to G$ be a section for the quotient map $G \to G/V$, i.e., a map such that $s(gV) \in gV$ for all $g \in G$. Define the cocycle $c \colon G \times G/V \to V$ by
\begin{equation}
  \label{eq:cocycle-def}
  c(g, hV) = s(ghV)^{-1}g s(hV)
\end{equation}
and note that it satisfies the \df{cocycle identity}:
\begin{equation*}
  c(g_1 g_2, hV) = c(g_1, g_2hV)c(g_2, hV) \quad \text{ for } g_1, g_2, h \in G.
\end{equation*}
Then one can define the \df{induced action} $\Ind_V^G(\sigma)$ of $G$ on $\MALG(\xi \otimes \mu)$ by
\begin{equation*}
  g \cdot (\set{hV} \times A) = \set{ghV} \times \sigma(c(g, hV)) \cdot A \quad \text{ for } g, h \in G, A \in \MALG(\mu).
\end{equation*}
The action does not depend on the choice of the section $s$ (up to conjugacy).
Note that if $\sigma$ preserves the measure $\mu$, then $\Ind_V^G(\sigma)$ preserves the measure $\xi \otimes \mu$. Also, if $\sigma$ is ergodic, then so is $\Ind_V^G(\sigma)$.

\subsection{A classification theorem for non-singular actions}
\label{sec:class-theor-non}

\begin{theorem}
  \label{th:non-singular-nA-RP}
  Let $G$ be a Roelcke precompact, non-archimedean, Polish group and let $G \actson (\Omega, \cB, \mu)$ be a non-singular action. Then there exist countably many self-commensurating, open subgroups $(V_i : i \in I)$ of $G$ and measure-preserving actions $V_i \actson^{\alpha_i} (\Omega_i, \cB_i, \mu_i)$, with each $\mu_i$ a probability measure, such that the original action $G \actson \Omega$ is isomorphic to the disjoint union
  \begin{equation}
    \label{eq:th:non-singular-nA-RP}
    \bigsqcup_i \Ind_{V_i}^G(\alpha_i).
  \end{equation}
  In particular, every non-singular action of $G$ is isomorphic to a measure-preserving action (with a possibly infinite measure) and every ergodic such action is of the form $\Ind_V^G(\alpha)$, where $V$ and $\alpha$ are as above.
\end{theorem}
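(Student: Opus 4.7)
The plan is a Zorn's lemma argument that peels off induced pmp pieces from $\Omega$ one at a time. Each piece will be produced from a positive $G_a$-fixed vector in the Koopman representation $G \actson L^2(\Omega)$ for a carefully chosen $a \in \cA$; the three key ingredients will be Corollary~\ref{c:Ha-dense} (to find such vectors), the well-foundedness of the order $<$ from Theorem~\ref{th:A-partial-order}\autoref{i:p:A-wf} (to constrain the choice of $a$), and the orthogonality relation of Proposition~\ref{p:unitary-rep} (to obtain disjoint translates).

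The heart of the proof will be the following main lemma: for every $G$-invariant measurable $\Omega' \sub \Omega$ of positive measure, there exist $a \in \cA$ and a positive $f \in L^2(\Omega',\mu)^{G_a}$ with $f \neq 0$ such that $\supp f \cap g \cdot \supp f = \bZero$ in $\MALG(\mu)$ for every $g \notin G_a$. To produce the pair, I would apply Corollary~\ref{c:Ha-dense} to the subrepresentation $L^2(\Omega')$ to obtain some $a \in \cA$ with $L^2(\Omega')_a \neq 0$, and then pick such $a$ minimal in $<$. Since the Koopman operators preserve the positive cone and commute with the modulus (the Radon--Nikodym factor in~\eqref{eq:koopman} being positive real), a nonzero positive vector fixed by some open finite-index $V \leq G_a$ exists; averaging it over the finite group $G_a/V$ produces the desired positive $f$ fixed by all of $G_a$.

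The hard part will be the disjointness of translates. Given $g \notin G_a$, I would first show that $a$ and $g \cdot a$ are incomparable in $\cA$: by equivariance $\acl(g \cdot a) = g \cdot \acl a$, and since $g$ restricts to a bijection of each $G$-orbit onto itself, the sets $\acl a$ and $g \cdot \acl a$ meet every $G$-orbit in subsets of the same finite cardinality. So $\acl a \sub g \cdot \acl a$ would force equality and hence $g \in G_{\acl a} = G_a$ by Lemma~\ref{l:stab-loc-fin}, a contradiction; the reverse inclusion is excluded symmetrically. Incomparability yields $a \wedge g \cdot a < a$ strictly, so by the minimality of $a$ we have $L^2(\Omega')_{a \wedge g \cdot a} = 0$, and Proposition~\ref{p:unitary-rep} applied inside $L^2(\Omega')$ then gives $L^2(\Omega')_a \perp L^2(\Omega')_{g \cdot a}$. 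Since $g \cdot f$ belongs to $L^2(\Omega')_{g \cdot a}$ (its stabilizer being $g G_a g^{-1} = G_{g \cdot a}$), we conclude $\langle f, g \cdot f \rangle = 0$, and positivity of both vectors forces their supports to be disjoint.

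Once the main lemma is in hand, the rest is routine. The $G$-saturation $\tilde E = \bigsqcup_{gG_a \in G/G_a} g \cdot E$ is a $G$-invariant subset of $\Omega'$; by Lemma~\ref{l:fixed-vector}, $G_a$ preserves the probability measure $f^2\, \ud\mu / \nm{f}_2^2$ on $E = \supp f$; and the natural map $G/G_a \times E \to \tilde E$, $(gG_a, x) \mapsto g \cdot x$, identifies $G \actson \tilde E$ with $\Ind_{G_a}^G$ of this pmp action. A Zorn's lemma argument on disjoint families of such pieces produces a maximal family $(\tilde E_i)_{i \in I}$, and if its union failed to cover $\Omega$, applying the main lemma to the (still $G$-invariant) complement would produce a further piece contradicting maximality. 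Separability of $\MALG(\mu)$ forces $I$ to be countable.
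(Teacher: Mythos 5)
Your proposal is correct and follows essentially the same route as the paper: find a $<$-minimal $a \in \cA$ with $L^2(\Omega')_a \neq 0$ via \autoref{c:Ha-dense} and \autoref{th:A-partial-order}, build a positive $G_a$-fixed vector by summing moduli over the finite orbit, use \autoref{p:unitary-rep} together with minimality to get disjoint translates of its support, identify the saturation with $\Ind_{G_a}^G$ of the resulting pmp action, and exhaust by Zorn's lemma. The only local deviations are harmless: you prove $a \wedge g\cdot a < a$ by a counting argument on orbit intersections of $\acl a$ and $g \cdot \acl a$ instead of invoking \autoref{l:stab-loc-fin}\autoref{i:slf:2}, and the identification map should be written via a choice of coset representatives (a section $G/G_a \to G$), as $(gG_a, x) \mapsto g \cdot x$ is not well defined as stated.
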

\begin{proof}
  First, we show that there is a non-null $G$-invariant set $E \in \MALG(\mu)$ such that the action of $G$ on $E$ is isomorphic to $\Ind_V^G(\alpha)$ as above. Let $\cH = L^2(\Omega, \mu)$ and consider the Koopman representation $\kappa$ of $G$ on $\cH$ given by \eqref{eq:koopman}. Let $(\cA, \leq)$ and the subspaces $\cH_a$ for $a \in \cA$ be defined as in Subsection~\ref{sec:unit-repr}. By \autoref{c:Ha-dense}, $\bigcup_{a \in \cA} \cH_a$ is dense in $\cH$, so there exists $a$ with $\cH_a \neq 0$. By \autoref{th:A-partial-order}, there is a $<$-minimal element $a \in \cA$ with $\cH_a \neq 0$. 

  Let $f_0 \in \cH_{a}$ be non-zero with $G_{a} \cdot f_0 = \set{f_0, \ldots, f_n}$ finite. Then $f_a \coloneqq \sum_j |f_j|$ is a $G_{a}$-invariant positive element of $\cH_a$ and, after normalizing, it gives rise to a $G_{a}$-invariant probability measure $\nu_a \prec \mu$ as in \autoref{l:fixed-vector}. Let $E = \supp \nu_a$ and note that $E$ is $G_a$-invariant. Let $T$ be a set of representatives of the cosets in $G/G_{a}$ with $1_G \in T$. We will show that for $t_1 \neq t_2 \in T$, $t_1 \cdot E \cap t_2 \cdot E = \emptyset$. It suffices to see that for $t \neq 1_G$, $t \cdot E \cap E = \emptyset$. By \autoref{p:unitary-rep}, $\cH_a \Perp[\cH_{a \wedge t \cdot a}] \cH_{t \cdot a}$. We also have that $t^{-1} \cdot a \notin \acl a$ by \autoref{l:stab-loc-fin}, so $a \wedge t \cdot a < a$ and by the choice of $a$, $\cH_{a \wedge t \cdot a} = 0$. Now $\big(\RaNi{\nu_a}{\mu}\big)^{1/2} = f_a \in \cH_a$ and $\big(\RaNi{(t \cdot \nu_a)}{\mu}\big)^{1/2} = t \cdot f_a \in \cH_{t \cdot a}$. This shows that the two Radon--Nikodym derivatives are orthogonal, so the measures $\nu_a$ and $t \cdot \nu_a$ have disjoint supports, i.e., $E \cap t \cdot E = \emptyset$, as desired.

  Denote the probability measure-preserving action of $G_a$ on $(E, \nu_a)$ by $\alpha$. We will see that the actions
  $\Ind_{G_a}^G(\alpha)$ and $G \actson \bigsqcup_{t \in T} t \cdot E$ are isomorphic. In order to make the isomorphism more transparent, we choose the section $s \colon G/G_a \to G$ appearing in the definition of the cocycle \autoref{eq:cocycle-def} to have image $T$, i.e,
  \begin{equation*}
    s(gG_a) = t \iff t \in T \And tG_a = gG_a.
  \end{equation*}
  Now define a map
  \begin{equation*}
    \Phi \colon \MALG(\Ind_{G_a}^G(\alpha)) \to \MALG(\bigsqcup_{t \in T} t \cdot E, \sum_t t \cdot \nu_a)
  \end{equation*}
  by
  \begin{equation*}
    \Phi(\set{tG_a} \times D) = t \cdot D \quad \text{for } t \in T, D \in \MALG(E, \nu_a).
  \end{equation*}
  It is straightforward to check that $\Phi$ is an isomorphism.

  To conclude with the proof of the theorem, we apply Zorn's lemma. Let $\cD$ be the collection of all elements of $\MALG(\mu)$ which are $G$-invariant and such that the restriction of the action of $G$ to them is of the form \eqref{eq:th:non-singular-nA-RP}, and order it by inclusion. If $\cC \sub \cD$ is a chain, we can always find a cofinal set $\cC' \sub \cC$, which is countable, because $(\cC, \sub)$ is isomorphic to $(\set{\mu(C) : C \in \cC}, \leq) \sub (\R, \leq)$. It is clear that $\bigcup \cC'$ is an upper bound for $\cC$. Let $D$ be a maximal element of $\cD$. If $D \neq \Omega$, we can apply what we already proved to the action of $G$ on $\Omega \sminus D$ and contradict its maximality.
\end{proof}


\section{A generalization of de Finetti's theorem}
\label{sec:some-gener-deFinetti}

In this section, we prove a classification result for some specific pmp actions of permutation groups, inspired by de Finetti's theorem. We let $G \leq \Sym(M)$ be a permutation group with $M$ countable. We equip $G$ with the topology inherited from $\Sym(M)$, so that $G$ becomes a non-archimedean group, and consider pmp actions of $G$ continuous for this topology. (We call a pmp action $G \actson (Z, \mu)$ \df{continuous} if the corresponding morphism $G \to \Aut(\mu)$ is continuous.) One loses nothing if one requires in addition that $G$ is a closed subgroup of $\Sym(M)$: every continuous action of $G$ extends to its closure in $\Sym(M)$. Moreover if $G$ is closed (i.e., Polish), then every Borel pmp action $G \actson (Z, \mu)$ on a standard probability space gives rise to a continuous homomorphism $G \to \Aut(\mu)$. Our main goal is to classify, under suitable assumptions on $G$, the $G$-invariant Borel probability measures on the space $\Omega \coloneqq [0, 1]^M$ under the shift action
\begin{equation*}
  (g \cdot \omega)(a) = \omega(g^{-1} \cdot a), \quad \omega \in \Omega, a \in M.
\end{equation*}
It will be helpful to adopt a probabilistic viewpoint. For each $a \in M$, we consider the projection $\xi_a \colon \Omega \to [0, 1]$, $\xi_a(\omega) = \omega(a)$ as a random variable and the problem becomes to classify all possible $G$-invariant distributions $\mu$ of the variables $(\xi_a : a \in M)$. 
A $\sigma$-field of events corresponds simply to a closed subalgebra of $\MALG(\mu)$. If $\cG \sub \MALG(\mu)$ is a $\sigma$-field, we denote by $\E_\cG$ the conditional expectation relative to $\cG$. We let $L^2(\cG)$ be the closed subspace of $L^2(\Omega)$ consisting of $\cG$-measurable functions and recall that $\E_\cG$ restricted to $L^2(\Omega)$ is the orthogonal projection on $L^2(\cG)$. If $\cA$ is a collection of events or random variables, we denote by $\gen{\cA}$ the $\sigma$-field generated by $\cA$.

The tail $\sigma$-field is a classical object in the study of random processes; here we will define a dynamical variant of it that exists in any pmp $G$-system for a permutation group $G$.
\begin{defn}
  \label{df:sigma-fields}
  Let $G \leq \Sym(M)$ be a permutation group and let $G \actson (Z, \mu)$ be a continuous pmp action.
  For a finite $A \sub M$, we denote
  \begin{equation*}
    \cF_A \coloneqq \set{E \in \MALG(\mu) : g \cdot E = E \text{ for all } g \in G_{(A)}}.
  \end{equation*}
  If $a \in M$, we write $\cF_a$ instead of $\cF_{\set{a}}$. If $D \sub M$ is an arbitrary subset, we let
  \begin{equation*}
    \cF_D \coloneqq \cl{\bigcup \set[\big]{\cF_A : A \sub D \text{ finite}}}.
  \end{equation*}
  We define the \df{invariant $\sigma$-field} as
  \begin{equation*}
    \cJ \coloneqq \cF_{\emptyset} = \set{E \in \MALG(\mu) : g \cdot E = E \text{ for all } g \in G}.
  \end{equation*}
  The action $G \actson (Z, \mu)$ is \df{ergodic} if $\cJ$ is trivial.
  Finally, we define the
  \df{dynamical tail $\sigma$-field} as
  \begin{equation*}
    \cT \coloneqq \bigcap \set{\cF_{M \sminus A} : A \sub M \text{ finite}}.
  \end{equation*}
\end{defn}
We note that $\cF_C \sub \cF_D$ for all $C, D \sub M$, $g \cdot \cF_D = \cF_{g \cdot D}$, and $g \cdot \cT = \cT$ for all $g \in G$. It follows from \autoref{l:dense-fixed-basis} that
\begin{equation}
  \label{eq:FM-everything}
  \cF_M = \MALG(\mu).
\end{equation}

The following permutation group lemma is well-known. See, for example, \cite{Hodges1993}*{Lemma~4.2.1} for a proof.
\begin{lemma}[Neumann]
  \label{l:Neumann}
  Let $G \leq \Sym(M)$ be a permutation group. Then the following are equivalent:
  \begin{itemize}
  \item The orbits of $G$ are infinite; 
  \item For all finite $A, B \sub M$ there exists $g \in G$ with $g \cdot A \cap B = \emptyset$.
  \end{itemize}
\end{lemma}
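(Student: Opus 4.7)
The plan is to prove each direction separately; the only substantive content is a reduction of the main implication to B.H.~Neumann's classical coset-covering lemma. The direction going from the second condition to infinitude of orbits is trivial by contrapositive: if some orbit $G \cdot a_0$ is finite, set $A = \set{a_0}$ and $B = G \cdot a_0$; then $g \cdot a_0 \in B$ for every $g \in G$, so no translate of $A$ is disjoint from $B$.

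For the main direction, assume that every $G$-orbit on $M$ is infinite and let $A, B \sub M$ be finite. Suppose, toward a contradiction, that $g \cdot A \cap B \neq \emptyset$ for every $g \in G$. Equivalently,
\begin{equation*}
  G = \bigcup_{a \in A,\, b \in B} S_{a,b}, \quad \text{where } S_{a,b} = \set{g \in G : g \cdot a = b}.
\end{equation*}
For each $(a, b)$ with $S_{a,b} \neq \emptyset$, fixing any $h_0 \in S_{a,b}$ shows that $g \in S_{a,b}$ iff $h_0^{-1} g \in G_a$, so $S_{a,b} = h_0 G_a$ is a left coset of the point stabilizer $G_a$. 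Hence $G$ is expressed as a finite union of cosets of the subgroups $\set{G_a : a \in A}$.

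At this point I invoke B.H.~Neumann's covering lemma: if a group is written as a finite union of cosets of subgroups, then at least one of those subgroups has finite index. Applied to the cover above, this forces some $G_a$ to have finite index in $G$, so that the orbit $G \cdot a$ is finite---contradicting the hypothesis. The only nontrivial ingredient is the appeal to Neumann's covering lemma, which I would cite as a classical result; everything else is routine bookkeeping.
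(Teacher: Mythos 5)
Your proof is correct, and it follows essentially the same route as the source the paper cites for this lemma (Hodges, Lemma~4.2.1): the easy direction by taking $A=\{a_0\}$ and $B$ the finite orbit, and the main direction by writing $G$ as a finite union of left cosets of the stabilizers $G_a$, $a \in A$, and invoking B.~H.~Neumann's coset-covering lemma to get a finite-index stabilizer, hence a finite orbit. Nothing is missing.
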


\begin{prop}
  \label{p:inv-sub-tail}
  Suppose that the action $G \actson M$ has infinite orbits. Then $\cJ \sub \cT$.
\end{prop}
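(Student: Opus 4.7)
The plan is to observe that the inclusion $\cJ \sub \cT$ is essentially formal and falls out by unwinding the definitions, with the hypothesis on infinite orbits playing no direct role in this direction.

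First I would identify $\cJ$ with $\cF_\emptyset$. By the standard convention that the pointwise stabilizer of the empty set is the whole group, i.e., $G_{(\emptyset)} = G$, the defining condition of $\cF_\emptyset$ is precisely that every element of $G$ fixes $E$, which is the defining condition of $\cJ$. Hence $\cJ = \cF_\emptyset$.

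Next, I would note that for every finite $A \sub M$, the empty set is a (trivially) finite subset of $M \sminus A$, so $\cF_\emptyset$ appears among the algebras $\cF_B$ with $B \sub M \sminus A$ finite whose union is closed to define $\cF_{M \sminus A}$. Consequently $\cF_\emptyset \sub \cF_{M \sminus A}$ for every finite $A \sub M$. Intersecting this inclusion over all finite $A$ gives $\cJ = \cF_\emptyset \sub \bigcap_{A} \cF_{M \sminus A} = \cT$.

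There is no real obstacle: the hypothesis that the orbits of $G$ on $M$ are infinite is not used in this direction, and is presumably included because the reverse inclusion (or equality in an appropriate ergodic setting, used later together with \autoref{l:Neumann}) is the substantive content for which the hypothesis matters.
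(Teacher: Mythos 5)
Your argument is valid as the definitions are literally stated, but it is a genuinely different (and much more formal) route than the paper's. You exploit the fact that each $\cF_B$ is the fixed-point algebra of the subgroup $G_{(B)} \leq G$, so every $G$-invariant event lies in every $\cF_B$; in particular $\cJ = \cF_\emptyset$ is one of the algebras whose union defines $\cF_{M \sminus A}$, and the inclusion $\cJ \sub \cT$ follows with no hypothesis at all. The paper instead proves an approximation statement: using $\cF_M = \MALG(\mu)$ (i.e.\ \eqref{eq:FM-everything}, which rests on continuity of the action) it approximates an invariant $E$ by some $E' \in \cF_A$ with $A$ finite, and then uses \autoref{l:Neumann} (this is where the infinite-orbit hypothesis enters) to move $A$ off any prescribed finite set $B$, so that $g \cdot E'$ is $\cF_{g \cdot A}$-measurable with $g \cdot A \sub M \sminus B$ and still $\eps$-close to $E$. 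What your shortcut buys is brevity and the observation that, for the \emph{dynamical} tail $\cT$, the hypothesis is superfluous; what it loses is robustness: the paper's argument is exactly the one that survives when the $\sigma$-fields $\cF_A$ are replaced by $\cF_A' = \gen{\xi_a : a \in A}$ and $\cT$ by the usual tail $\cT_0$ (cf.\ \autoref{rem:tail0-instead-of-tail} and \autoref{th:intro:indep-tail}), where $\cF'_\emptyset$ is trivial, your formal inclusion gives nothing, and infinite orbits are genuinely needed (for a trivial action on $M = \N$ and a Bernoulli measure, $\cJ$ is everything while $\cT_0$ is trivial). So your proof is correct for the statement as written, but be aware that it relies on a special feature of the dynamical $\sigma$-fields rather than on the tail-approximation phenomenon the paper's proof is really about.
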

\begin{proof}
  Let $E \in \cJ$. It follows from \autoref{eq:FM-everything} that there exists a finite set $A \sub M$ and a $\cF_A$-measurable event $E'$ with $\mu(E \sdiff E') < \eps$. If $B \sub M$ is any finite set, by \autoref{l:Neumann}, there is $g \in G$ with $g \cdot A \cap B = \emptyset$. By invariance of $E$, this implies that for any $B$, there is an $\cF_{M \sminus B}$-measurable event $E''$ (namely, $E'' = g \cdot E'$) with $\mu(E \sdiff E'') < \eps$. Taking limits as $B$ exhausts $M$ and $\eps \to 0$, we obtain that $E \in \cT$.
\end{proof}

If $\cG_1, \cG_2, \cG_3$ are three $\sigma$-fields, we write $\cG_1 \indep[\cG_2] \cG_3$ to denote that $\cG_1$ and $\cG_3$ are conditionally independent over $\cG_2$. We refer to \cite{Kallenberg2002} for the basic properties of conditional independence.
\begin{prop}
  \label{p:tail-indep}
  Let $G \leq \Sym(M)$ be a permutation group and let $A \sub G$ be such that the orbits of the action $G_{(A)} \actson M \sminus A$ are infinite. Let $G \actson (Z, \mu)$ be any continuous pmp action. Then $\cF_A \indep[\cT] \cF_{M \sminus A}$.
\end{prop}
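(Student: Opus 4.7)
The plan is to reformulate the conditional independence $\cF_A \indep[\cT] \cF_{M \sminus A}$ as a statement about a single conditional expectation, and then exploit the $G_{(A)}$-invariance built into $\cF_A$. First I would observe that $\cT \sub \cF_{M \sminus A}$: indeed, for any finite subset of $A$, the definition of $\cT$ as an intersection includes the corresponding $\cF_{M \sminus (\cdot)}$, which dominates $\cF_{M \sminus A}$ when $A$ is finite. Hence the conditional independence is equivalent to the assertion that for every $f \in L^2(\cF_A)$ the projection $\E_{\cF_{M \sminus A}} f$ is in fact $\cT$-measurable (and then, by the tower property, automatically coincides with $\E_\cT f$).

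The next observation is that every $f \in L^2(\cF_A)$ is $G_{(A)}$-invariant: for $f \in L^2(\cF_{A_0})$ with $A_0 \sub A$ finite this is by definition, and it propagates to all of $L^2(\cF_A)$ by density, since $G_{(A)} \leq G_{(A_0)}$ for every such $A_0$. Moreover, $\cF_{M \sminus A}$ is $G_{(A)}$-stable as a $\sigma$-field, because $G_{(A)}$ fixes $A$ pointwise and therefore stabilizes $M \sminus A$ setwise. Combining these two facts, $\E_{\cF_{M \sminus A}} f$ is $G_{(A)}$-invariant. So the proposition reduces to: every $G_{(A)}$-invariant element $h \in L^2(\cF_{M \sminus A})$ lies in $L^2(\cT)$.

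To prove this reduction, I would fix such an $h$ and an arbitrary finite $C \sub M$ and aim to show $h \in L^2(\cF_{M \sminus C})$. Given $\eps > 0$, use the density definition of $\cF_{M \sminus A}$ to approximate $h$ within $\eps$ by some $h' \in L^2(\cF_B)$ with $B \sub M \sminus A$ finite; enlarging $B$ if necessary, assume $B \supseteq C \sminus A$. By \autoref{l:Neumann} applied to $G_{(A)} \actson M \sminus A$, whose orbits are infinite by hypothesis, there exists $g \in G_{(A)}$ with $g \cdot B \cap B = \emptyset$. Since $g$ fixes $A$ pointwise, $g \cdot B \sub M \sminus A$, so $g \cdot B$ is disjoint from $C \cap A$, and by choice of $g$ it is disjoint from $B \supseteq C \sminus A$; hence $g \cdot B \sub M \sminus C$ and $g \cdot h' \in L^2(\cF_{g \cdot B}) \sub L^2(\cF_{M \sminus C})$. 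Using $g \cdot h = h$ and the fact that the action is unitary, $\nm{h - g \cdot h'} = \nm{h - h'} < \eps$, and the closedness of $L^2(\cF_{M \sminus C})$ delivers $h \in L^2(\cF_{M \sminus C})$. Intersecting over all finite $C \sub M$ gives $h \in L^2(\cT)$.

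The main conceptual step is identifying $G_{(A)}$-invariance as the bridge between $\cF_A$ and $\cT$; once this is in place, the rest is a standard displacement argument using Neumann's lemma, in the spirit of the classical proof of Kolmogorov's zero-one law. I would expect no serious obstacle beyond getting the bookkeeping of the three sets $A$, $B$, $C$ correct when applying the displacement.
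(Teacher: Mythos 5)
Your argument is correct for finite $A$ (which is the only case the paper actually uses, via \autoref{c:tail-single}), and it takes a genuinely different route from the paper's. The paper fixes a bounded $\cF_A$-measurable $\xi$, notes its $G_{(A)}$-invariance, and combines \autoref{l:Neumann} with reverse martingale convergence $\E_{\cG_n}\xi \to \E_\cT \xi$ (for $\cG_n = \cF_{M \sminus S_n}$) in a comparison of projection norms, obtaining $\E_{\cT \vee \cF_B}\xi = \E_\cT \xi$ for every finite $B \sub M \sminus A$ and then invoking the standard criterion for conditional independence. You work on the other side: since $\E_{\cF_{M \sminus A}} f$ is $G_{(A)}$-invariant, you reduce to showing that every $G_{(A)}$-invariant element of $L^2(\cF_{M \sminus A})$ is $\cT$-measurable, which you prove by a displacement argument (approximate by an $\cF_B$-measurable $h'$, move $B$ off any finite $C$ by an element of $G_{(A)}$ that fixes $h$). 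The essential mechanism --- $G_{(A)}$-invariance plus Neumann's lemma --- is the same, but your packaging avoids both the reverse martingale theorem and the norm comparison, so it is more elementary; what the paper's version buys is that it applies verbatim to arbitrary $A \sub M$.

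The one caveat concerns your opening reduction. You use the inclusion $\cT \sub \cF_{M \sminus A}$, and your justification is valid only when $A$ is finite (then $\cF_{M \sminus A}$ is literally one of the $\sigma$-fields whose intersection defines $\cT$). The statement (reading $A \sub G$ as the evident typo for $A \sub M$) does not assume $A$ finite, and for infinite $A$ the inclusion is not justified: an event in $\cT$ is approximable by events in $\cF_B$ with $B$ finite and avoiding any prescribed finite set, but not necessarily avoiding all of the infinite set $A$; and without the inclusion, $\cT$-measurability of $\E_{\cF_{M \sminus A}} f$ is no longer equivalent to the identity $\E_{\cT \vee \cF_{M \sminus A}} f = \E_\cT f$ that conditional independence requires. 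The repair is routine: show directly that every $G_{(A)}$-invariant $h \in L^2(\cT \vee \cF_{M \sminus A})$ lies in $L^2(\cT)$, by approximating $h$ by finite sums of products $t u$ with $t$ bounded $\cT$-measurable and $u$ bounded $\cF_B$-measurable for finite $B \sub M \sminus A$, applying your displacement $g \in G_{(A)}$ to such a sum, and using that $g \cdot \cT = \cT$ and that $\cT \sub \cF_{M \sminus C}$ for every \emph{finite} $C$; alternatively, prove $\cF_A \indep[\cT] \cF_B$ for each finite $B \sub M \sminus A$ and pass to the limit, as the paper does. With either adjustment your proof covers the full statement.
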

\begin{proof}
  Let $M = \bigcup_n S_n$ with $S_n$ finite and increasing and let $\cG_n = \cF_{M \sminus S_n}$. Then $\cG_0 \supseteq \cG_1 \supseteq \cdots$ and $\bigcap_n \cG_n = \cT$. 
  Let $B \sub M \sminus A$ be finite.
  Using \autoref{l:Neumann}, for each $n$, find $g_n \in G_{(A)}$ such that $g_n \cdot B \cap S_n = \emptyset$, so that, in particular, $\cF_{g_n \cdot B} \sub \cG_n$. Let now $\xi$ be any bounded $\cF_A$-measurable random variable. Note that $g \cdot \xi = \xi$ for all $g \in G_{(A)}$. 

Let $\eps > 0$. By reverse martingale convergence \cite{Durrett2005}*{Theorem~5.6.3}, we have $\E_{\cG_n} \xi \to \E_\cT \xi$ in $L^2$; let $n$ be such that $\nm{\E_{\cG_n} \xi} - \nm{\E_{\cT} \xi} < \eps$ (here $\nm{\cdot}$ is the $L^2$-norm). We have:
\[
\begin{split}
  \nm{\E_{\cT \vee \cF_B} \xi} &= \nm{\E_{g_n \cdot \cT \vee \cF_{g_n \cdot B}} g_n \cdot \xi} \\
  &=\nm{\E_{\cT \vee \cF_{g_n \cdot B}} \xi} \\
  &\leq \nm{\E_{\cG_n \vee \cF_{g_n \cdot B}} \xi} \\
  &= \nm{\E_{\cG_n} \xi} \leq \nm{\E_\cT \xi} + \eps.
\end{split}
\]
As this is true for all $\eps$, we obtain that $\nm{\E_{\cT \vee \cF_B} \xi} = \nm{\E_\cT \xi}$, whence $\E_{\cT \vee \cF_B} \xi = \E_\cT \xi$, showing that $\cF_A$ and $\cF_B$ are independent over $\cT$ \cite{Kallenberg2002}*{Proposition~5.6}.
\end{proof}

\begin{cor}
  \label{c:tail-single}
  Let $G \leq \Sym(M)$ be a permutation group such that for every $a \in M$, the orbits of $G_a$ on $M \sminus \set{a}$ are infinite. Let $G \actson (Z, \mu)$ be a continuous pmp action. Then the $\sigma$-fields $\cF_a$ are conditionally independent over $\cT$.
\end{cor}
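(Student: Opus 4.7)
The plan is to derive the corollary from \autoref{p:tail-indep} applied pointwise and then assemble the joint conditional independence via the chain rule for conditional independence. For each $a \in M$, the hypothesis of the corollary asserts precisely that the orbits of $G_{(\set{a})} = G_a$ on $M \sminus \set{a}$ are infinite, so applying \autoref{p:tail-indep} with $A = \set{a}$ yields directly
\begin{equation*}
  \cF_a \indep[\cT] \cF_{M \sminus \set{a}} \quad \text{for every } a \in M.
\end{equation*}

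To upgrade these ``one-versus-rest'' statements to joint conditional independence of the whole family $(\cF_a)_{a \in M}$ over $\cT$, I would invoke the standard fact (see \cite{Kallenberg2002}) that a family of $\sigma$-fields is conditionally independent over $\cT$ iff every finite subfamily is, and the latter is in turn equivalent to the following chain condition: for any enumeration $a_1, \ldots, a_n$ of distinct elements of $M$, one needs $\cF_{a_k} \indep[\cT] \bigvee_{i < k} \cF_{a_i}$ for every $k \leq n$. The key monotonicity observation is that $\bigvee_{i < k} \cF_{a_i} \sub \cF_{M \sminus \set{a_k}}$, since each $a_i$ with $i < k$ lies in $M \sminus \set{a_k}$ and $\cF_C \sub \cF_D$ whenever $C \sub D$ (which is immediate from the definition, as $C \sub D$ implies $G_{(D)} \leq G_{(C)}$). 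Combining this inclusion with $\cF_{a_k} \indep[\cT] \cF_{M \sminus \set{a_k}}$ and using monotonicity of conditional independence gives the desired chain condition.

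There is essentially no obstacle here; the corollary is a bookkeeping consequence of \autoref{p:tail-indep} together with elementary properties of conditional independence, and all the probabilistic and dynamical content sits in the proposition itself.
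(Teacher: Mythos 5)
Your proposal is correct and follows essentially the same route as the paper: both apply \autoref{p:tail-indep} with $A = \set{a}$ and then reduce joint conditional independence over $\cT$ to the one-versus-finitely-many statements via standard monotonicity/chain-rule properties of conditional independence (using that $\bigvee_{i<k}\cF_{a_i} \sub \cF_{M \sminus \set{a_k}}$, respectively $\cF_B \sub \cF_{M\sminus\set{a}}$ for finite $B \not\ni a$). No gaps.
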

\begin{proof}
    To prove the conclusion, it suffices to check that for every $a \in M$ and finite $B \sub M$ with $a \notin B$, $\cF_a \indep[\cT] \cF_B$. This follows from \autoref{p:tail-indep} applied with $A = \set{a}$.
\end{proof}

\begin{prop}
  \label{p:gen-tail}
  Let $G \leq \Sym(M)$ and $G \actson (Z, \mu)$ satisfy the assumptions of \autoref{c:tail-single}. Then:
  \[
    \cT \cap \bigvee_{a \in M} \cF_a = \bigvee_{a \in M} \cT \cap \cF_a.
  \]
\end{prop}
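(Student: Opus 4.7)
The inclusion $\supseteq$ is immediate: for each $a \in M$, $\cT \cap \cF_a$ is contained in both $\cT$ and $\bigvee_{b \in M} \cF_b$, so the join on the right is contained in $\cT \cap \bigvee_a \cF_a$. For the reverse inclusion $\subseteq$, my plan is to work in $L^2$, approximate elements of $L^2(\bigvee_a \cF_a)$ by polynomials in $\cF_a$-measurable functions, and show that their conditional expectations onto $\cT$ automatically land in $L^2(\bigvee_a \cT \cap \cF_a)$.

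The key computation is the following. Let $a_1, \ldots, a_n \in M$ be distinct and let $\eta_i \in L^\infty(\cF_{a_i})$ for each $i$. By \autoref{c:tail-single}, the $\sigma$-fields $\cF_{a_1}, \ldots, \cF_{a_n}$ are conditionally independent over $\cT$, and therefore
\begin{equation*}
  \E_\cT[\eta_1 \cdots \eta_n] = \E_\cT[\eta_1] \cdots \E_\cT[\eta_n].
\end{equation*}
I then claim that each factor $\E_\cT[\eta_i]$ is $\cT \cap \cF_{a_i}$-measurable. That it is $\cT$-measurable is clear. For $\cF_{a_i}$-measurability, note that $\eta_i$ is $G_{a_i}$-invariant (as $\cF_{a_i}$ consists exactly of the $G_{a_i}$-invariant events), and $\cT$ is $G$-invariant; so for every $g \in G_{a_i}$, using that the action is measure-preserving and conditional expectation is $G$-equivariant,
\begin{equation*}
  g \cdot \E_\cT[\eta_i] = \E_{g \cdot \cT}[g \cdot \eta_i] = \E_\cT[\eta_i],
\end{equation*}
so $\E_\cT[\eta_i]$ is $G_{a_i}$-invariant, i.e., lies in $\cF_{a_i}$. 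This equivariance argument is the crux of the proof; everything else is essentially bookkeeping.

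To finish, I will use the density of linear combinations of products $\eta_1 \cdots \eta_n$ as above (with distinct $a_i \in M$ and $\eta_i \in L^\infty(\cF_{a_i})$) in $L^2(\bigvee_{a \in M} \cF_a)$; this is the standard fact that a tensor-product-type family generates the join of the $\sigma$-fields. Given $\xi \in L^2(\cT \cap \bigvee_a \cF_a)$, pick such approximants $\xi_n \to \xi$ in $L^2$. Applying $\E_\cT$, which is $L^2$-continuous, we get $\E_\cT \xi_n \to \E_\cT \xi = \xi$, while each $\E_\cT \xi_n$ lies in $L^2(\bigvee_a \cT \cap \cF_a)$ by the computation above. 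Hence $\xi$ is a limit of $\bigvee_a (\cT \cap \cF_a)$-measurable functions, which gives the desired inclusion.

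The main obstacle is the $\cF_{a_i}$-measurability of $\E_\cT[\eta_i]$; without it the argument collapses to the trivial observation that $\E_\cT[\eta_i] \in L^2(\cT)$. The equivariance of $\cT$ under the full group $G$ (and not merely its stabilizers) is what makes this step work, and it is precisely at this point that the dynamical nature of $\cT$ (as opposed to a classical tail $\sigma$-field) is essential.
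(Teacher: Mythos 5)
Your proof is correct and follows essentially the same route as the paper: the product formula from \autoref{c:tail-single}, the equivariance argument showing $\E_\cT[\eta_i]$ is $G_{a_i}$-invariant and hence $\cT \cap \cF_{a_i}$-measurable, and a density/continuity argument to conclude. The only cosmetic difference is that you approximate a given $\xi \in L^2(\cT \cap \bigvee_a \cF_a)$ and use $\E_\cT \xi = \xi$, while the paper phrases the same step via the identity $\E_{\cT \cap \cG}\E_\cT = \E_{\cT \cap \cG}$.
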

\begin{proof}
  Let $\cG = \bigvee_a \cF_a$. 
  Let $a_1, \ldots, a_n$ be distinct elements of $M$ and let, for each $i$, $\xi_i$ be a bounded $\cF_{a_i}$-measurable random variable. Then, by \autoref{c:tail-single},
  \begin{equation}
    \label{eq:indep-tail}
    \E_{\cT} \xi_{1} \cdots \xi_{n} = (\E_{\cT} \xi_{1}) \cdots (\E_{\cT} \xi_{n}).
  \end{equation}
  For every $g \in G_{a_i}$, we have that $g \cdot \E_\cT \xi_i = \E_{g \cdot \cT} g \cdot \xi_i = \E_\cT \xi_i$, so $\E_\cT \xi_i \in \cF_{a_i}$. In particular, $\E_\cT \xi_1 \cdots \xi_n$ is $\cG$-measurable, so also $\cT \cap \cG$-measurable. As $\E_{\cT \cap \cG} \E_\cT = \E_{\cT \cap \cG}$, this implies that $\E_\cT \xi_1 \cdots \xi_n = \E_{\cT \cap \cG} \xi_1 \cdots \xi_n$. Linear combinations of variables of the form $\xi_1 \cdots \xi_n$ generate a dense subspace of $L^2(\cG)$, so $\E_\cT \xi_1 \cdots \xi_n$ generate a dense subspace of $L^2(\cT \cap \cG)$. However, by \autoref{eq:indep-tail}, each of them belongs to $\bigvee_a \cT \cap \cF_a$ and we are done.
\end{proof}

For the rest of the section, we will specialize to the action $G \actson ([0, 1]^M, \mu)$, where $\mu$ is an arbitrary invariant probability measure. We recall that $\xi_a$ denotes the projection on the coordinate $a$. In particular, $\MALG(\mu) = \gen{\xi_a : a \in M} = \bigvee_a \cF_a$. In this setting, we also have the usual tail $\sigma$-field.
\begin{defn}
  \label{df:usual-tail}
  Let $(\xi_a : a \in M)$ be a collection of random variables. Then the \df{tail $\sigma$-field} is
  \begin{equation*}
    \cT_0 = \bigcap \set[\big]{\gen{\xi_a : a \in M \sminus A} : A \sub M \text{ finite}}.
  \end{equation*}
\end{defn}


\begin{remark}
  \label{rem:tail0-instead-of-tail}
  We note that \autoref{p:tail-indep} and \autoref{c:tail-single} remain true if one replaces $\cT$ with $\cT_0$. In the proofs, one simply needs to replace the $\sigma$-fields $\cF_A$ (for $A \sub M$) by $\cF_A'$ defined by
  \begin{equation*}
    \cF_A' = \gen{\xi_a : a \in A}.
  \end{equation*}
\end{remark}

In fact, under a somewhat stronger hypothesis on the action $G \actson M$, the two tail $\sigma$-fields coincide.
We recall that $G \actson M$ \df{has no algebraicity} if for every finite $A \sub M$, the action $G_{(A)} \actson M \sminus A$ has infinite orbits.

\begin{prop}
  \label{p:t-equal-t0}
  Suppose that the action $G \actson M$ has no algebraicity. Then $\cT = \cT_0$.
\end{prop}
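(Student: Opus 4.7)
The inclusion $\cT_0 \sub \cT$ is immediate: for every finite $A$, $\gen{\xi_b : b \in M \sminus A} = \cF'_{M \sminus A} \sub \cF_{M \sminus A}$, because $\cF'_B \sub \cF_B$ for each finite $B$, and this passes to the closed union defining $\cF_{M \sminus A}$.

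For $\cT \sub \cT_0$, the plan is to first establish the ``crossed'' conditional independence $\cF_{M \sminus A} \indep[\cT_0] \cF'_A$ for every finite $A \sub M$, by adapting the proof of \autoref{p:tail-indep}. Let $\xi$ be bounded and $\cF_B$-measurable for some finite $B \sub M \sminus A$, and exhaust $M$ by finite sets $S_n$. No algebraicity together with \autoref{l:Neumann} (applied to $G_{(B)} \actson M \sminus B$) yields $g_n \in G_{(B)}$ with $g_n \cdot A \cap S_n = \emptyset$. Since $g_n \cdot \xi = \xi$, $g_n \cdot \cT_0 = \cT_0$, and $g_n \cdot \cF'_A = \cF'_{g_n \cdot A} \sub \cG'_n \coloneqq \cF'_{M \sminus S_n}$,
\begin{equation*}
  \nm{\E_{\cT_0 \vee \cF'_A}\xi} = \nm{\E_{\cT_0 \vee \cF'_{g_n \cdot A}}\xi} \leq \nm{\E_{\cG'_n}\xi} \to \nm{\E_{\cT_0}\xi}
\end{equation*}
by reverse-martingale convergence along $\cG'_n \searrow \cT_0$ (as in \autoref{rem:tail0-instead-of-tail}). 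Combined with the reverse inequality from $\cT_0 \sub \cT_0 \vee \cF'_A$, this forces $\E_{\cT_0 \vee \cF'_A}\xi = \E_{\cT_0}\xi$, hence $\cF_B \indep[\cT_0] \cF'_A$. The $'$-version is essential on the $A$-side (so that the transported $\cF'_{g_n \cdot A}$ fits inside $\cG'_n$), but on the $B$-side $\cF_B$ suffices because only the $G_{(B)}$-invariance of $\xi$ is used. Taking the directed union over finite $B \sub M \sminus A$ then yields $\cF_{M \sminus A} \indep[\cT_0] \cF'_A$.

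With this in hand, fix $E \in \cT$. For every finite $A$, $\bOne_E$ is $\cF_{M \sminus A}$-measurable, so $\bOne_E$ is independent of $\cF'_A$ given $\cT_0$. As $\set{\cF'_A : A \sub M \text{ finite}}$ is directed with $\bigvee_A \cF'_A = \MALG(\mu)$, the usual continuity of conditional independence along increasing unions extends this to $\bOne_E \indep[\cT_0] \MALG(\mu)$. In particular, $\bOne_E$ is independent of itself given $\cT_0$: setting $f \coloneqq \E_{\cT_0}\bOne_E$, independence gives $f = \E_{\cT_0}(\bOne_E \cdot \bOne_E) = f^2$, while $\bOne_E^2 = \bOne_E$ gives $\E_{\cT_0}\bOne_E^2 = f$. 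Thus $\mathrm{Var}(\bOne_E \mid \cT_0) = f - f^2 = 0$, so $\bOne_E = f$ almost surely and $E \in \cT_0$. The main obstacle is the crossed conditional independence in the second paragraph, where the bookkeeping between $\cF$ and $\cF'$ has to be handled with care.
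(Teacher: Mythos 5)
Your argument is correct, but it takes a genuinely different route from the paper's. The paper proves $\cT \sub \cT_0$ by a purely elementary approximation: given $E \in \cT$ and a finite $C$, pick a cylinder event $E'$ over a finite set $A$ with $\mu(E \sdiff E') < \eps$, then (since $E \in \cF_{M \sminus A}$) an event $E'' \in \cF_B$ with $B \cap A = \emptyset$ and $\mu(E \sdiff E'') < \eps$, and finally, by no algebraicity and \autoref{l:Neumann} applied to $G_{(B)}$, a $g \in G_{(B)}$ with $g \cdot A \cap C = \emptyset$; since $g$ fixes $E''$, three triangle inequalities give $\mu(E \sdiff g \cdot E') \leq 3\eps$ with $g \cdot E' \in \gen{\xi_a : a \in M \sminus C}$, whence $E \in \cT_0$. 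You instead re-run the reverse-martingale argument of \autoref{p:tail-indep} in a ``crossed'' form to get $\cF_{M \sminus A} \indep[\cT_0] \cF_A'$ for every finite $A$, and then conclude by the self-independence computation $\E_{\cT_0}\bOne_E = (\E_{\cT_0}\bOne_E)^2$. Your bookkeeping is sound: keeping $\cF_B$ on the $\xi$-side only uses $G_{(B)}$-invariance of $\xi$, the $'$-fields on the $A$-side guarantee $\cT_0 \vee \cF'_{g_n \cdot A} \sub \cF'_{M \sminus S_n}$, and no algebraicity is exactly what allows \autoref{l:Neumann} to be applied to every pointwise stabilizer $G_{(B)}$; the passage to directed unions of $\sigma$-fields on both sides is the standard continuity of conditional independence, used implicitly in the paper as well. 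What your route buys is a stronger intermediate statement of independent interest — the crossed conditional independence over $\cT_0$, a hybrid of \autoref{p:tail-indep} and \autoref{rem:tail0-instead-of-tail} — from which $\cT \sub \cT_0$ falls out formally; what the paper's route buys is economy, using nothing beyond Neumann's lemma, measure-theoretic approximation, and invariance of $\mu$, with no martingale convergence or conditional-independence calculus.
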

\begin{proof}
  As $\xi_a$ is $\cF_a$-measurable for every $a \in M$, the inclusion $\cT_0 \sub \cT$ is clear. For the converse, let $E \in \cT$, let $C \sub M$ be finite, and let $\eps > 0$. There exists a finite $A \sub M$ and $E' \in \gen{\xi_a : a \in A}$ with $\mu(E \sdiff E') < \eps$. As $E \in \cT$, there also exists a finite $B \sub M$ with $B \cap A = \emptyset$ and $E'' \in \cF_B$ such that $\mu(E \sdiff E'') < \eps$. Using the no algebraicity assumption and \autoref{l:Neumann}, let $g \in G_{(B)}$ be such that $g \cdot A \cap C = \emptyset$. Then $g \cdot E' \in \gen{\xi_a : a \in M \sminus C}$ and
  \begin{equation*}
    \begin{split}
      \mu(E \sdiff g \cdot E') &\leq \mu(E \sdiff E'') + \mu(E'' \sdiff g \cdot E') \\
                               &\leq \eps + \mu(E'' \sdiff E') \leq 3 \eps.
    \end{split}
  \end{equation*}
  As $C$ and $\eps$ were arbitrary, this shows that $E \in \cT_0$.
\end{proof}

In order to motivate the next definition, we recall some terminology and facts from model theory. For a permutation group $G \leq \Sym(M)$, we will say that two finite tuples $\bar a, \bar b \in M^n$ \df{have the same type} (notation: $\tp \bar a = \tp \bar b$) if they are in the same $G$-orbit for the diagonal action $G \actson M^n$. Two countable tuples $\bar a, \bar b$ \df{have the same type} if for all $n \in \N$, $\tp (\bar a|_n) = \tp (\bar b|_n)$. It is easy to see that having no algebraicity is equivalent to the following condition: for every $n \in \N$, every $\bar b \in M^n$, and $a \in M$ not from $\bar b$, there exists $a' \neq a$ such that $\tp (\bar b a) = \tp(\bar b a')$. We will need the following uniform version for infinite tuples. Say that $M$ has \df{uniform non-algebraicity} if for all (possibly infinite) tuples $\bar b$ from $M$ and $a \in M$ not from $\bar b$, there exist $\bar b', a', a''$ such that $a' \neq a''$ and $\tp (\bar b'a') = \tp (\bar b'a'') = \tp (\bar ba)$. We note that uniform non-algebraicity implies non-algebraicity. The converse holds in the presence of compactness (for example, if $G \actson M$ is oligomorphic or, more generally, if $M$ is a homogeneous structure whose age is given by a first-order universal theory and $G$ is its automorphism group).

Recall that a permutation group $G \leq \Sym(M)$ is called \df{transitive} if there is only one orbit and a transitive permutation group is called \df{primitive} if there are no non-trivial $G$-invariant partitions of $M$. A transitive action is primitive iff the stabilizer $G_a$ is a maximal subgroup of $G$ for every (some) $a \in M$. We will say that $G_a$ is \df{boundedly maximal} if there is $d \in \N$ such that for every $g \notin G_a$, every element of $G$ can be written as a word of length at most $d$ in $g, g^{-1}$, and elements of $G_a$. This is automatic for primitive actions if there are only finitely many orbits of $G$ on $M^2$ but can also happen in other situations. For a classification in the pseudo-finite case, see \cite{Liebeck2010}.

\begin{theorem}
  \label{th:tail-equal-invariant}
  Let $G \leq \Sym(M)$ be a permutation group with no algebraicity such that for every $a \in M$, the stabilizer $G_a$ is a maximal subgroup of $M$ (i.e., the action on every orbit is primitive). Suppose, moreover, that at least one of the following conditions holds:
  \begin{enumerate}
  \item \label{i:twi:uniform-noalg} $G \actson M$ has uniform non-algebraicity; or
  \item \label{i:twi:uniform-prim} for any $a \in M$, $G_a$ is boundedly maximal.
  \end{enumerate}
  Let $G \actson \Omega \coloneqq [0, 1]^M$ be the shift action and let $\mu$ be an invariant probability measure. Then $\cT = \cJ$.
  In particular, the projections $\xi_a$ are conditionally independent over $\cJ$ and if the action is ergodic, the measure $\mu$ is a product measure.
\end{theorem}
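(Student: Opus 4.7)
The inclusion $\cJ \sub \cT$ is \autoref{p:inv-sub-tail}, so only $\cT \sub \cJ$ remains. By \autoref{p:gen-tail}, $\cT$ is the join of the sub-$\sigma$-fields $\cT \cap \cF_a$, $a \in M$, and it therefore suffices to show that each $\cT \cap \cF_a$ is contained in $\cJ$. Fix $E \in \cT \cap \cF_a$: since $E \in \cF_a$, its setwise stabilizer $\operatorname{Stab}_G(E)$ contains $G_a$, so by maximality of $G_a$ it equals either $G_a$ or $G$; the latter is precisely $E \in \cJ$. The substance of the proof is therefore to rule out $\operatorname{Stab}_G(E) = G_a$ under each of the uniformity assumptions.

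Under \autoref{i:twi:uniform-prim}, fix $g_0 \notin G_a$; bounded maximality writes every $g \in G$ as a word of length at most $d$ in $g_0^{\pm 1}$ and $G_a$. The plan is to exploit the tail approximability $E \approx E' \in \cF_B$, with $B$ disjoint from any preassigned finite set and $E'$ fixed by the large pointwise stabilizer $G_{(B)}$, together with the $G_a$-invariance of $E$ and Neumann's lemma inside $G_a$, which lets me shift $B$ away from the moving sets associated with $g_0^{\pm 1}$. The bounded word length keeps the shifts under control and forces some element of $G \sminus G_a$ to fix $E$, contradicting the assumption.

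Under \autoref{i:twi:uniform-noalg}, \autoref{c:tail-single} together with a standard disintegration yields $\cT$-measurable random probability measures $\nu_c$ on $[0,1]$ such that, conditional on $\cT$, the $\xi_c$ are independent with $\xi_c \sim \nu_c$. By \autoref{p:t-equal-t0}, $\cT$ coincides with the tail $\sigma$-field of $(\xi_c)_c$, and in fact, up to null sets, with that of $(\nu_c)_c$. Apply uniform non-algebraicity to an enumeration $\bar b$ of $M \sminus \set{a}$ to obtain $\bar b'$ and distinct $a', a''$ with $\tp(\bar b' a') = \tp(\bar b' a'') = \tp(\bar b a)$. The $G$-invariance of $\mu$ forces the joint laws of $((\nu_{b_i})_i, \nu_a)$, $((\nu_{b'_i})_i, \nu_{a'})$, and $((\nu_{b'_i})_i, \nu_{a''})$ to agree, so the conditional laws of $\nu_{a'}$ and $\nu_{a''}$ given $(\nu_{b'_i})_i$ coincide; the tail character of $\cT$ upgrades this to the almost sure equality $\nu_{a'} = \nu_{a''}$, and propagating across the $G$-orbit yields $\mu$-almost sure $G$-invariance of the family $(\nu_c)_c$, which is equivalent to $\cT \sub \cJ$.

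With $\cT = \cJ$ established, the conditional independence of the $\xi_a$ over $\cJ$ is immediate from \autoref{c:tail-single}; in the ergodic case $\cJ$ is trivial, so the $\xi_a$ are independent, and the $G$-invariance of $\mu$ makes it a product measure whose marginal depends only on the $G$-orbit. The main obstacle I expect is, in the uniform non-algebraicity case, the upgrade from equality in conditional distribution to the almost sure $\nu_{a'} = \nu_{a''}$: it rests on a delicate tail identification of $\cT$, and may require iterating uniform non-algebraicity to produce a full indiscernible sequence of realizations of $\tp(a/\bar b')$ and a law-of-large-numbers argument that expresses $\nu_{a'}$ as a deterministic function of $(\nu_{b'_i})_i$.
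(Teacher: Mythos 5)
Your global frame (the inclusion $\cJ \sub \cT$ from \autoref{p:inv-sub-tail} and the reduction, via \autoref{p:gen-tail}, to showing $\cT \cap \cF_a \sub \cJ$ for each $a$) is exactly the paper's, but both case analyses have genuine gaps. In case \autoref{i:twi:uniform-prim} your quantifiers are in the wrong order: you fix $g_0 \notin G_a$ first and then want to move the approximating set $B$ ``away from the moving sets associated with $g_0^{\pm 1}$''. But the support of $g_0$ is in general infinite and its fixed-point set may be empty, so no finite $B$, even after shifting by elements of $G_a$, puts $g_0$ into $G_{(B)}$ or makes it approximately fix an $\cF_B$-measurable approximation of $E$. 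Moreover, your goal of producing an element of $G \sminus G_a$ that fixes $E$ \emph{exactly} cannot be reached by approximation alone: the tail hypothesis only yields, for each $\eps > 0$, some $h_\eps \notin G_a$ with $\mu(E \sdiff h_\eps \cdot E) < \eps$, and there is no compactness allowing a passage to a limit element (if there were, the uniformity hypotheses would be superfluous, contradicting the Jahel--Perruchaud example). The working argument reverses the order: given $E_n \in \cF_{B_n}$ with $\mu(E \sdiff E_n) < 1/n$, use no algebraicity to choose $h \in G_{(B_n)} \sminus G_a$ (so $h$ depends on $n$); since the bound $d$ in bounded maximality is uniform over the choice of $h$, every $g \in G$ is a word of length at most $d$ in $h^{\pm 1}$ and elements of $G_a$, whence $\nm{\bOne_E - g \cdot \bOne_E} \leq \eps$ for \emph{all} $g$ simultaneously; letting $\eps \to 0$ for fixed $g$ (or taking the minimal-norm element of the closed convex hull of $G \cdot \bOne_E$, as the paper does) gives $E \in \cJ$, with no need to exhibit a single exact fixer outside $G_a$.

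In case \autoref{i:twi:uniform-noalg} there are two problems. First, the step you flag yourself -- upgrading equality of the conditional laws of $\nu_{a'}$ and $\nu_{a''}$ given $(\nu_{b'_i})_i$ to the almost sure equality $\nu_{a'} = \nu_{a''}$ -- is not a loose end but the heart of the matter: it would require $\nu_{a'}$ to be measurable with respect to $\gen{\nu_{b'_i} : i}$, and nothing you have gives this, since $\bar b'$ enumerates only some infinite subset of $M$ while $\cT = \cT_0$ provides measurability only with respect to coordinates indexed by cofinite sets. Second, and more decisively, your argument in this case never uses primitivity, and the statement is false without it: for the automorphism group of an equivalence relation with infinitely many infinite classes (no algebraicity and uniform non-algebraicity hold, but the action is imprimitive), the pair $a', a''$ obtained from $\bar b$ enumerating $M \sminus \set{a}$ necessarily lies in a single class, so ``propagating across the $G$-orbit'' only yields $\nu_c = \nu_{c'}$ for pairs in the orbit of $(a', a'')$, and indeed for hierarchically exchangeable measures one has $\cT \neq \cJ$. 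The paper's proof uses uniform non-algebraicity quite differently: it transports the tail approximations so that the single event $E' = g \cdot E$ is fixed by both $G_{a'}$ and $G_{a''}$, and only then does primitivity (maximality of $G_{a'}$, together with the infinitude of the orbit $G_{a'} \cdot a''$) force $\gen{G_{a'}, G_{a''}} = G$ and hence $E \in \cJ$. Any repair of your disintegration route must make primitivity enter at the analogous point.
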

\begin{proof}
  The inclusion $\cJ \sub \cT$ follows from \autoref{p:inv-sub-tail}.

  For the other inclusion, by \autoref{p:gen-tail}, it suffices to show that for every $a \in M$, $\cF_a \cap \cT \sub \cJ$. Let $E \in \cF_a \cap \cT$. As $E \in \cT$, for every $n$, there exists a finite $B_n \sub M$ with $a \notin B_n$ and $E_n \in \cF_{B_n}$ such that $\mu(E \sdiff E_n) < 1/n$.

  Suppose now that \autoref{i:twi:uniform-noalg} holds and let the tuple $\bar b$ enumerate the set $\bigcup_n B_n$. By uniform non-algebraicity, there exist $\bar b'$ and $a' \neq a''$ with $\tp (\bar b a) = \tp (\bar b' a') = \tp (\bar b' a'')$. Let $g \in G$ be such that $g \cdot a = a'$ and let $E' = g \cdot E$. In particular, $E' \in \cF_{a'}$. Let $B_n' \sub B'$ be such that there is $h_n \in G$ satisfying $h_n \cdot B_n = B_n'$ and $h_n \cdot a = a'$. This implies that $h_n \cdot E = E'$. Let $E_n' = h_n \cdot E_n$, so that we have $\mu(E_n' \sdiff E') < 1/n$ and $E_n' \in \cF_{B_n'}$. Let $g_n \in G_{(B_n)}$ be such that $g_n \cdot a' = a''$, so that $g_n \cdot E' \in \cF_{a''}$. We have:
  \begin{equation*}
    \mu(g_n \cdot E' \sdiff E') \leq \mu(g_n \cdot E' \sdiff E_n') + \mu(E_n' \sdiff E') \leq 2/n.
  \end{equation*}
  Taking limits as $n \to \infty$, we obtain that $E' \in \cF_{a''}$. Thus $E'$ is fixed by both $G_{a'}$ and $G_{a''}$. As the orbit $G_{a'} \cdot a''$ is infinite, we must have that $\gen{G_{a'}, G_{a''}} \supsetneq G_{a'}$, so by maximality of $G_{a'}$, $E'$ is fixed by $G$. Therefore, so is $E = g^{-1} \cdot E'$.

  Suppose now that \autoref{i:twi:uniform-prim} holds and let $d$ be such that for every $h \notin G_a$, every element of $G$ can be written as a word of length at most $d$ in $h, h^{-1}$, and elements of $G_a$. Let $\eps > 0$ and let $n > (4d^2)/\eps^2$. By the no algebraicity assumption, there is $h \in G_{(B_n)}$, $h \notin G_a$. Let $E' = h \cdot E$, so that $\mu(E \sdiff E') < 2/n$. Let $\xi = \bOne_E$ and note that
  \begin{equation}
    \label{eq:hxi}
    \nm{\xi - h \cdot \xi} = \nm{\xi - h^{-1} \cdot \xi} = \mu(E \sdiff E')^{1/2} < \eps/d.
  \end{equation}
  Let $g \in G$ be arbitrary. By assumption, there exist $g_1, \ldots, g_d \in G_a$ and $\eps_1, \ldots, \eps_d \in \set{\pm 1}$ such that $g = h^{\eps_1}g_1 \cdots h^{\eps_d}g_d$. Notice that for every $\xi' \in L^2(\Omega)$ and any $g_i$,
  \begin{equation*}
    \nm{\xi - \xi'} = \nm{\xi - g_i \cdot \xi'}
  \end{equation*}
  and for any $\eps_i$, using \autoref{eq:hxi},
  \begin{equation*}
    \nm{\xi - h^{\eps_i} \cdot \xi'} \leq \nm{h^{\eps_i} \cdot \xi - h^{\eps_i} \cdot \xi'} + \nm{h^{\eps_i} \cdot \xi - \xi} \leq \nm{\xi - \xi'} + \eps/d.
  \end{equation*}
  So, by induction, $\nm{\xi - g \cdot \xi} < \eps$ and this holds for all $g \in G$. Let $\eta$ be the unique element of minimal norm in the closed convex hull of $G \cdot \xi$. Then $\nm{\xi - \eta} \leq \eps$ and $\eta$ is $G$-invariant. As $\eps$ was arbitrary, we conclude that $\xi$ is fixed by $G$, so $E \in \cJ$.

  Finally, the last claim follows from \autoref{c:tail-single}.
\end{proof}

\begin{cor}
  \label{c:primitive-oligom}
  Suppose that $G \leq \Sym(M)$ is a primitive, oligomorphic permutation group with no algebraicity. Let $G \actson [0, 1]^M$ be the shift action. Then any $G$-invariant, ergodic measure on $[0, 1]^M$ is of the form $\lambda^{\otimes M}$, where $\lambda$ is a probability measure on $[0, 1]$.
\end{cor}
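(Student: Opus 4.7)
\medskip

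\noindent\textbf{Proof plan.} The plan is simply to reduce the corollary to \autoref{th:tail-equal-invariant}, so the main task is to verify that the oligomorphic hypothesis implies one of the two supplementary conditions \autoref{i:twi:uniform-noalg} or \autoref{i:twi:uniform-prim} in that theorem. I would invoke condition \autoref{i:twi:uniform-noalg}, namely uniform non-algebraicity, because the remark preceding \autoref{th:tail-equal-invariant} explicitly states that non-algebraicity upgrades to uniform non-algebraicity in the presence of compactness, and oligomorphy is given as the first example.

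\smallskip

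\noindent Concretely, first I would record that primitivity of $G \actson M$ gives that the stabilizer $G_a$ is a maximal subgroup of $G$ for every $a \in M$, which is the primitivity hypothesis of \autoref{th:tail-equal-invariant}. Next, I would justify uniform non-algebraicity: given an arbitrary (possibly infinite) tuple $\bar b$ from $M$ and $a \in M$ not from $\bar b$, one must find $\bar b', a', a''$ with $a' \neq a''$ and $\tp(\bar b' a') = \tp(\bar b' a'') = \tp(\bar b a)$. Since $G \actson M^n$ has finitely many orbits for every $n$, the space of types in each finite arity is finite, so the collection of first-order conditions expressing ``$\bar b'$ realizes the type of $\bar b$ and $a', a''$ are distinct realizations of the type of $a$ over $\bar b'$'' is finitely satisfiable by the ordinary (finite) non-algebraicity hypothesis; a routine compactness/ultrafilter extraction argument then realizes the whole system simultaneously.

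\smallskip

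\noindent With these hypotheses in hand, \autoref{th:tail-equal-invariant} applied to the shift action $G \actson [0,1]^M$ with any ergodic invariant probability measure $\mu$ yields that $\cT = \cJ$, that the coordinate projections $(\xi_a : a \in M)$ are conditionally independent over $\cJ$, and—since $\cJ$ is trivial by ergodicity—that $\mu$ is a product measure $\bigotimes_{a \in M} \lambda_a$, where $\lambda_a$ denotes the law of $\xi_a$. Finally, primitivity implies transitivity of $G \actson M$, so for any $a, b \in M$ there exists $g \in G$ with $g \cdot a = b$; invariance of $\mu$ under $g$ gives $\lambda_a = \lambda_b$. Writing $\lambda$ for this common marginal yields $\mu = \lambda^{\otimes M}$, as claimed.

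\smallskip

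\noindent I do not expect any real obstacle here: the corollary is essentially a packaging of \autoref{th:tail-equal-invariant} with the oligomorphic compactness observation. The only mildly delicate point is the compactness step promoting finite non-algebraicity to uniform non-algebraicity for infinite tuples, but this is a standard finiteness-of-types argument.
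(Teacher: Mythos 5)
Your proposal is correct and follows essentially the same route as the paper: reduce to \autoref{th:tail-equal-invariant} by noting that primitivity gives maximality of stabilizers and that oligomorphy plus no algebraicity yields uniform non-algebraicity via compactness for the $\aleph_0$-categorical structure $M$ (the paper verifies condition \autoref{i:twi:uniform-prim} as well, via finiteness of double cosets, but one condition suffices). Your added observation that transitivity forces equal marginals, so the product measure is indeed of the form $\lambda^{\otimes M}$, is a harmless and correct supplement left implicit in the paper.
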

\begin{proof}
  If the permutation group $G \leq \Sym(M)$ is oligomorphic, it satisfies both conditions \autoref{i:twi:uniform-noalg} and \autoref{i:twi:uniform-prim} of \autoref{th:tail-equal-invariant}. Indeed, \autoref{i:twi:uniform-noalg} follows from the no algebraicity assumption and the compactness theorem for first-order logic (by considering $M$ as an $\aleph_0$-categorical structure) and \autoref{i:twi:uniform-prim} follows from primitivity and the fact that each $G_a$ has only finitely many double cosets in $G$.
\end{proof}

\begin{remark}
  \label{rem:JT-example}
  \autoref{c:primitive-oligom} is a strengthening of \cite{Jahel2022}*{Theorem~1.1} because instead of weak elimination of imaginaries, one requires just primitivity of the action. It is claimed in \cite{Jahel2022}*{Example~5.1.3} that primitivity is not sufficient. However, the example presented there is incorrect because it does have algebraicity. To see this, observe that, in the notation of \cite{Jahel2022}*{Example~5.1.3}, for any $c, d \in S_1$, we have that $|R(c) \cap R(d)| \leq 1$. Indeed, if $a, b \in R(c) \cap R(d)$, then $c, d \in R(a) \cap R(b)$, contradicting the condition in the age. In fact, by the extension property of \Fraisse limits, $|R(c) \cap R(d)| = 1$ for all $c, d \in S_1$. This means that the algebraic closure in $S_0$ is non-trivial: take $a_1, b_1, a_2, b_2$ with $R(a_i) \cap R(b_i) = \set{c_i}$ with $c_1 \neq c_2$. Then $R(c_1) \cap R(c_2)$ is a singleton, which is in the algebraic closure of $a_1, b_1, a_2, b_2$.
\end{remark}

We end the paper with several examples of non-oligomorphic groups where \autoref{th:tail-equal-invariant} applies. All of the examples are constructed as appropriate Fraïssé limits.
\begin{example}
  \label{ex:Urysohn-Z}
  Let $\bU_\Z$ be the \df{integer Urysohn space}, i.e., the Fraïssé limit of all finite metric spaces with integer-valued metric. It is well-known that this is an amalgamation class and it is not difficult to check that the action $\Iso(\bU_\Z) \actson \bU_\Z$ satisfies condition \autoref{i:twi:uniform-noalg} and the other hypotheses of \autoref{th:tail-equal-invariant}. However, it does not satisfy condition \autoref{i:twi:uniform-prim} (and it is not oligomorphic). Indeed, let $a \in \bU_\Z$ be any point and let $g \notin G_a$. Then any element of $G$ which can be written as a word of bounded length in elements of $G_a$ and $g, g^{-1}$ can move the point $a$ only a bounded distance from itself. \autoref{th:tail-equal-invariant} for $\bU_\Z$ is not new---it was already proved in \cite{Barritault2024pp}---but this example is a good illustration for the type of situations where condition \autoref{i:twi:uniform-prim} fails.
\end{example}

\begin{example}
  \label{ex:inf-rel-no-triangles}
  Let $\cL = \set{R_i : i \in \N}$ be a signature with infinitely many binary relations. Let $\cF_1$ be the Fraïssé class of all finite $\cL$-structures such that all relations are symmetric and for every pair of points, exactly one relation holds. Let $M_1$ be its Fraïssé limit. Then the action $\Aut(M_1) \actson M_1$ is not oligomorphic (because it has infinitely many orbits on pairs) and it satisfies both conditions \autoref{i:twi:uniform-noalg} and \autoref{i:twi:uniform-prim} of \autoref{th:tail-equal-invariant}.

  Next consider the Fraïssé class $\cF_2$ in the same signature where, in addition, one forbids monochromatic triangles, i.e., configurations $\set{R_i(a, b), R_i(b, c), R_i(c, a)}$ for all $i \in \N$, and let $M_2$ denote the Fraïssé limit. Then condition \autoref{i:twi:uniform-prim} holds but \autoref{i:twi:uniform-noalg} fails. To see this, let $a \in M_2$ and let $\bar b$ be an infinite tuple such that $R_i(a, b_i)$ for all $i$. Then it is impossible to have two distinct points $a', a''$ which have the type of $a$ over a copy $\bar b'$ of $\bar b$: $R_i(a', a'')$ must hold for some $i$ and this will create a monochromatic triangle $\set{a', a'', b_i'}$.
\end{example}

\bibliography{non-singular-oligom}

\end{document}